\newcommand{\g}{{\rm g}}
\DeclareMathOperator{\cp}{\,\square\,}
\newtheorem{thm}{Theorem}
\newtheorem{lem}[thm]{Lemma}
\newtheorem{prb}[thm]{Problem}
\newtheorem{obs}[thm]{Observation}
\newtheorem{coro}[thm]{Corollary}
\newtheorem{dfn}[thm]{Definition}
\renewcommand{\deg}{{\rm deg}}
\newcommand{\gp}{{\rm gp}}
\newcommand{\ip}{{\rm ip}}
\numberwithin{figure}{section}
\title{On the General Position Number of Mycielskian Graphs}\date{}
\journal{Discrete Applied Mathematics}
\begin{document}
	
	\newcommand{\TODO}[1]{\textcolor{red}{TODO: #1}}

	\begin{frontmatter}
		


		
		\author[label5,label1]{Elias John Thomas}
		\ead{eliasjohnkalarickal@gmail.com}
		\author[label2]{Ullas Chandran S. V.}
		\ead{svuc.math@gmail.com}
		\author[label3]{James Tuite}
		\ead{james.t.tuite@open.ac.uk}
		\author[label4]{Gabriele {Di Stefano}}
		\ead{gabriele.distefano@univaq.it}
		
		\address[label5]{Department of Mathematics, Mar Ivanios College, University of Kerala, Thiruvananthapuram-695015, Kerala, India}
		
		\address[label1]{Department of Mathematics, Greenshaw High School, Grennell Road, Sutton, UK}
		
		\address[label2]{Department of Mathematics, Mahatma Gandhi College, University of Kerala, Thiruvananthapuram-695004, Kerala, India}
		
		\address[label3]{School of Mathematics and Statistics, Open University, Walton Hall, Milton Keynes, UK}	
		
		\address[label4]{Department of Information Engineering, Computer Science and Mathematics, University of L'Aquila, Italy}
		
		\begin{abstract}
			The general position problem for graphs was inspired by the no-three-in-line problem from discrete geometry. A set $S$ of vertices of a graph $G$ is a \emph{general position set} if no shortest path in $G$ contains three or more vertices of $S$. The \emph{general position number} of $G$ is the number of vertices in a largest general position set. In this paper we investigate the general position numbers of the Mycielskian of graphs. We give tight upper and lower bounds on the general position number of the Mycielskian of a graph $G$ and investigate the structure of the graphs meeting these bounds. We determine this number exactly for common classes of graphs, including cubic graphs and a wide range of trees. 
		\end{abstract}
		
		\begin{keyword}
			Mycielskian graph \sep general position set \sep general position number \sep tree
			
			\MSC 05C12 \sep 05C69 
		\end{keyword}
		
	\end{frontmatter}

	\section{Introduction}\label{sec:intro}
	
	A \emph{graph} $G$ consists of a set of \emph{vertices} $V(G)$ connected by \emph{edges}, which are unordered pairs of vertices. All graphs used here are simple, connected and finite. We write $u \sim v$ if the vertices $u,v \in V(G)$ are adjacent. A \emph{path} $P_{\ell +1}$ of length $\ell $ in $G$ is a sequence $v_0,v_1,\dots ,v_{\ell }$ of distinct vertices such that $v_iv_{i+1}$ is an edge for $0 \leq i \leq \ell -1$. If $P$ is a path $v_0,v_1,\dots ,v_{\ell }$, then the \emph{reverse path} $\widetilde{P}$ of $P$ is the path $v_{\ell },\dots ,v_1,v_0$. The \emph{distance} $d_G(u,v)$ between vertices $u$ and $v$ of $G$ is the length of a shortest path from $u$ to $v$. A shortest path from $u$ to $v$ is also called a \emph{$u,v$-geodesic}. The \emph{neighbourhood} $N_G(u)$ of $u \in V(G)$ is the set $\{ v \in V(G): v \sim u\} $ of all vertices adjacent to $u$. More generally, for $t \geq 0$, $N^t(u)$ is the set of all vertices at distance $t$ from $u$, so that $N_G(u) = N^1(u)$. The \emph{degree} $\deg (u)$ is the number $|N_G(u)|$ of neighbours of $u$. A \emph{leaf} of $G$ is a vertex with degree one in $G$ and we define the \emph{leaf number} $\ell (G)$ to be the number of leaves in $G$. A vertex adjacent to a leaf is a \emph{support vertex}. A vertex of degree $n-1$ in a graph with order $n$ is \emph{universal}.
	
	A cycle $C_{\ell }$ of length $\ell $ is a sequence of vertices $v_0,v_1,\dots ,v_{\ell -1}$ such that $v_i\sim v_{i+1}$ for $0 \leq i \leq \ell -2$ and also $v_0 \sim v_{\ell -1}$. A connected graph with no cycles is a \emph{tree}. The \emph{girth} $\g(G)$ of $G$ is the length of a shortest cycle in $G$ (if $G$ is acyclic, then we adopt the convention that $\g(G) = \infty $). For $U \subseteq V(G)$ we write the subgraph induced by $U$ as $\langle U\rangle $. An independent set in $G$ is a set of mutually non-adjacent vertices, i.e. a set of vertices that induces an empty subgraph of $G$, and the cardinality $\alpha (G)$ of a largest independent set in $G$ is the \emph{independence number} of $G$. By contrast, a \emph{clique} in a graph is a set of mutually adjacent vertices. The complete graph $K_n$ is the graph with order $n$ such that every pair of vertices is adjacent, i.e. the whole vertex set is a clique.  A \emph{matching} $M$ in $G$ is an independent set of edges, i.e. if $e_1,e_2$ are two distinct edges of $M$, then $e_1$ and $e_2$ do not have an endvertex in common. The size of the largest matching in $G$ is the \emph{matching number} of $G$ and is denoted by $\nu(G)$. For any subsets $V_1,V_2 \subseteq V(G)$ we denote the set of edges of $G$ from $V_1$ to $V_2$ in $G$ by $(V_1,V_2)$. For any graph-theoretical terminology not defined here we take~\cite{BonMur} as our standard. 
	
	The \emph{general position problem} originated in Dudeney's no-three-in-line problem and the general position subset selection problem from discrete geometry~\cite{dudeney1958amusements,froese2017finding,payne2013general}. The general position problem was generalised to graph theory independently in~\cite{ullas-2016} and~\cite{manuel2018general} as follows. A set $S$ of vertices in a graph $G$ is a \emph{general position set}, or is in \emph{general position}, if no shortest path in $G$ contains three or more vertices of $G$. A  largest general position set of $G$ is called a \emph{gp-set} and its cardinality $\gp(G)$ is the \emph{general position number} of $G$ (or \emph{gp-number} for short). The structure of general position sets was characterised in~\cite{anand2019characterization}. In particular, it was shown in this article that any general position set induces an \emph{independent union of cliques}, i.e. a disjoint union of one or more cliques $W_1,\dots ,W_t$, $t \geq 1$, with no edges between cliques $W_i$ and $W_j$ if $1 \leq i < j \leq t$. These cliques must also have the \emph{distance-constant} property, meaning that for any cliques $W_i$ and $W_j$ in this collection and any vertices $w_1,w_1^\prime \in W_i$, $w_2,w_2^\prime \in W_j$, we have $d_G(w_1,w_2) = d_G(w_1^\prime ,w_2^\prime )$.
	
	We make use of two variants of general position sets. The largest number of vertices in a general position set that is also an independent set is called the \emph{independent position number} $\ip (G)$ and was studied in~\cite{independentposition}. The article~\cite{KlaRalYer} defined a \emph{general $d$-position set} to be a subset $S \subseteq V(G)$ such that no shortest path of length at most $d$ contains three or more vertices of $S$. The largest number of vertices in a general $d$-position set is denoted by $\gp _d(G)$.  Other types of position sets have also been investigated, including \emph{monophonic position sets}~\cite{ThomasChandranTuite}, \emph{mutual visibility sets}~\cite{mutualvisibility}, \emph{edge general position sets}~\cite{manuel-2022}, \emph{Steiner position sets}~\cite{KlaKuzPetYer}, \emph{mobile position sets}~\cite{KlaKriTuiYer}, \emph{vertex position sets}~\cite{ThaChaTuiThoSteErs}, \emph{lower general position sets}~\cite{lowerposition} and others. Some of the most recent papers on this subject include~\cite{iteration time,KlaTan,KlaTanTian,KorVes,prarenman}.
	
	In this paper we discuss the general position numbers of the Myclieskian of a graph. This construction was introduced by Mycielski in~\cite{mycielski1955coloriage} in order to produce triangle-free graphs with arbitrarily large chromatic number. Properties of the Mycielskian construction have been investigated extensively, for example dominator colouring number~\cite{AbidRao}, connectivity~\cite{Balakrishnan}, energy~\cite{BalKavSo}, $L(2,1)$-labelling number~\cite{DliBouKch}, circular chromatic number~\cite{Fan}, Gromov hyperbolicity~\cite{Granados}, wide diameter~\cite{SavVij} and Italian domination number~\cite{VarLak}, amongst many others.
	
	Let $G$ be a graph with vertex set $V = \{ u_1,\dots ,u_n\} $ and edge set $E$. The Mycielskian of $G$ is the graph $\mathcal{M}(G)$ defined as follows. The vertex set of $\mathcal{M}(G)$ is $V\cup V^\prime \cup \{u^*\}$, where $V = \{ u_1,\dots ,u_n\} $ is the vertex set of $G$, $V^\prime =\{u^\prime : u\in V\}$ is a copy of $V$ and $u^*$ is an additional vertex called the \emph{root vertex}. The edge set of $\mathcal{M}(G)$ is $E \cup \{uv^\prime : uv\in E\} \cup \{v^\prime u^*: v^ \prime \in V^\prime\}$. In other words, the edge set of $\mathcal{M}(G)$ consists of the edges of $G$, and for any $u \in V$ we join $u^\prime $ to the root $u^*$ and to each of the neighbours of $u$ in $G$. The Mycielskian of $P_8$ is shown in Figure~\ref{fig:Mycielskian of path} and the Mycielskian of the cycle $C_5$ in Figure~\ref{fig:Grotzsch graph}.
	
	We note that the Mycielskian is more often denoted by $\mu (G)$, but we avoid this notation here to avoid a clash with the notation used for the mutual visibility number, another position type invariant. We call the vertex $u^\prime$ the \emph{$\mathcal{M}$-twin} of $u$ (and conversely $u$ is the $\mathcal{M}$-twin of $u^\prime $). For any set $X = \{ x_1,\dots ,x_t\}$ of vertices of $G$ we denote the set of $\mathcal{M}$-twins of the vertices in $X$ by $X^\prime = \{ x_1^\prime ,\dots, x_t^\prime\} $. 
	
	\begin{figure}
		\centering
		\begin{tikzpicture}[x=0.4mm,y=-0.4mm,inner sep=0.2mm,scale=0.5,very thick,vertex/.style={circle,draw,minimum size=18,fill=white}]
			\node at (-200,0) [vertex] (u-4) {$u_0$};
			\node at (-150,0) [vertex] (u-3) {$u_1$};
			\node at (-100,0) [vertex] (u-2) {$u_2$};
			\node at (-50,0) [vertex] (u-1) {$u_3$};
			\node at (0,0) [vertex] (u0) {$u_4$};
			\node at (50,0) [vertex] (u1) {$u_5$};
			\node at (100,0) [vertex] (u2) {$u_6$};
			\node at (150,0) [vertex] (u3) {$u_7$};
			\node at (200,0) [vertex] (u4) {$u_8$};
			
			\node at (-200,-100) [vertex] (v-4) {$u_0^\prime $};
			\node at (-150,-100) [vertex] (v-3) {$u_1^\prime $};
			\node at (-100,-100) [vertex] (v-2) {$u_2^\prime $};
			\node at (-50,-100) [vertex] (v-1) {$u_3^\prime $};
			\node at (0,-100) [vertex] (v0) {$u_4^\prime $};
			\node at (50,-100) [vertex] (v1) {$u_5^\prime $};
			\node at (100,-100) [vertex] (v2) {$u_6^\prime $};
			\node at (150,-100) [vertex] (v3) {$u_7^\prime $};
			\node at (200,-100) [vertex] (v4) {$u_8^\prime $};
			
			\node at (0,-200) [vertex] (u*) {$u^*$};
			
			\path
			(u*) edge (v-4)
			(u*) edge (v-3)
			(u*) edge (v-2)
			(u*) edge (v-1)
			(u*) edge (v0)
			(u*) edge (v1)
			(u*) edge (v2)
			(u*) edge (v3)
			(u*) edge (v4)
			
			(u-4) edge (u-3)
			(u-3) edge (u-2)
			(u-2) edge (u-1)
			(u-1) edge (u0)
			(u0) edge (u1)
			(u1) edge (u2)
			(u2) edge (u3)
			(u3) edge (u4)
			
			(v-4) edge (u-3)
			(v-3) edge (u-2)
			(v-2) edge (u-1)
			(v-1) edge (u0)
			(v0) edge (u1)
			(v1) edge (u2)
			(v2) edge (u3)
			(v3) edge (u4)
			
			(v-2) edge (u-3)
			(v-1) edge (u-2)
			(v0) edge (u-1)
			(v1) edge (u0)
			(v2) edge (u1)
			(v3) edge (u2)
			(v4) edge (u3)
			(v-3) edge (u-4)

			;
		\end{tikzpicture}
		\caption{The Mycielskian of a path of length eight}
		\label{fig:Mycielskian of path}
	\end{figure}

	The plan of this paper is as follows. In Section~\ref{sec:complete graphs} we characterise general position sets of $\mathcal{M}(G)$ that contain the root vertex $u^*$. In Section~\ref{sec:MGP partitions} we describe general position sets of $\mathcal{M}(G)$ in terms of a `dual' partition of $V(G)$ into four parts. In Section~\ref{General bound} we give upper and lower bounds for $\gp (\mathcal{M}(G))$ and characterise the graphs that meet our upper bound. We also give exact values of $\gp (\mathcal{M}(G))$ for some common families of graphs. Section~\ref{sec:reg graphs} presents a bound on the gp-number of the Mycielskian of regular graphs, which we use to classify the gp-numbers of Mycielskians of cubic graphs and all sufficiently large regular graphs. Finally in Section~\ref{Trees} we determine the gp-numbers of the Mycielskians of a wide class of trees.
	
	\section{General position sets of $\mathcal{M}(G)$ containing the root $u^*$}\label{sec:complete graphs}
	
	Firstly, we introduce a convention that we use throughout the paper. Given a set $S$ of vertices of a graph, we call a geodesic containing at most two vertices of $S$ \emph{sound} and a shortest path containing at least three vertices of $S$ \emph{unsound}. 
	
	It is useful to disregard gp-sets of $\mathcal{M}(G)$ that contain the root vertex. In this section we classify the graphs such that $\mathcal{M}(G)$ has a general position set of cardinality at least $n+1$ that does not contain $u^*$. We also show that for any graph that is not a complete graph there is always a gp-set of $\mathcal{M}(G)$ that does not contain the root $u^*$.

	\begin{lem}\label{lem:completes}
		Let $G = (V,E)$ be a connected graph with order $n \geq 3$. Then there is a general position set $S$ of $\mathcal{M}(G)$ with $|S| \geq n+1$ that contains $u^*$ if and only if $G$ is a complete graph $K_n$ or the join of $K_1$ with a disjoint union of cliques, at least one of which is $K_1$.
	\end{lem}
	\begin{proof}
		Suppose that $S$ is a gp-set of $\mathcal{M}(G)$ with $|S| \geq n+1$ that contains the root $u^*$. Firstly, since $S\cap V(G)$ cannot contain an induced path $P_3$, the subgraph $\langle S \cap V(G) \rangle $ of $G$ induced by $S\cap V(G)$ must be an independent union of cliques. Also, $S$ can contain at most one vertex of $V^\prime $, since any two vertices of $V^\prime $ are connected by a shortest path through $u^*$.
		
		Suppose that there is a vertex $u \in V(G)$ such that $u,u^\prime \in S$. By our previous observation, $u^\prime $ is the only vertex of $V^\prime $ in $S$. Also, no neighbour of $u$ in $G$ lies in $S \cap V(G)$, since if $u \sim v$, then $u,v,u^\prime $ would be an unsound path. We conclude that $|S| \leq n+2-\deg_G(u)$. Hence $u$ is a leaf of $G$ and $S \cap V(G) = V(G) \setminus \{ v\} $, where $v$ is the support vertex of $u$. Let $W$ be any clique of $\langle S \cap V(G) \rangle $ apart from the leaf $u$. As $G$ is connected, there must an edge from some vertex $w$ of $W$ to a vertex outside $W$; as $u$ is adjacent only to $v$ and $\langle S \cap V(G) \rangle $ is an independent union of cliques, it must be that $w \sim v$. Since $d_{\mathcal{M}(G)}(u,w) = 2$, by the distance-constant property $u$ must be at distance two to every vertex of $W$. Thus $v$ is a universal vertex and $G$ has the claimed structure. It is easily verified that $\{ u^*,u^\prime \} \cup (V(G) \setminus \{ v\} )$ is in general position. An example is shown on the right of Figure~\ref{fig:gpsetu^*}.
		
		Now suppose that there is no vertex $u$ such that $u,u^\prime \in S$. Then we have $|S| = n+1$ and for any vertex $v$ of $G$, $S$ contains exactly one of $v$ and $v^\prime $. If $S \cap V^\prime = \emptyset $, then $V(G) \subset S$ and by our earlier observation $G$ is a clique, in which case $V(G) \cup \{ u^*\} $ is in general position. Otherwise $S$ contains a vertex $w^\prime \in V^\prime $ and so $S = \{ u^*,w^\prime \} \cup (V(G)\setminus \{ w\} )$. However, as $G$ is connected, $w$ has a neighbour $z$ in $G$, and $u^*,w^\prime ,z$ would be an unsound path, a contradiction. 
	\end{proof}
	Notice that the general position sets exhibited in the proof of Lemma~\ref{lem:completes} are not necessarily largest possible. In Corollary~\ref{cor:chargpsetswithu^*} we determine the gp-numbers of the Mycielskian of the join of $K_1$ with a disjoint union of cliques, thereby classifying those graphs for which the Mycielskian has a gp-set containing $u^*$.
	
	\begin{figure}
		\centering
		\begin{tikzpicture}[x=0.2mm,y=-0.2mm,inner sep=0.2mm,scale=0.8,thick,vertex/.style={circle,draw,minimum size=10,fill=lightgray}]
			\node at (-350,-50) [vertex,color=red] (u1) {};
			\node at (-275,-50) [vertex,color=red] (u2) {};
			\node at (-200,-50) [vertex,color=red] (u3) {};
			\node at (-125,-50) [vertex,color=red] (u4) {};
			\node at (-50,-50) [vertex,color=red] (u5) {};
			
			\node at (-350,50) [vertex] (v1) {};
			\node at (-275,50) [vertex] (v2) {};
			\node at (-200,50) [vertex] (v3) {};
			\node at (-125,50) [vertex] (v4) {};
			\node at (-50,50) [vertex] (v5) {};
			
			\node at (-200,120) [vertex,color=red] (u*) {};
			
			\node at (500,-50) [vertex,color=red] (w-1) {};
			\node at (425,-50) [vertex,color=red] (w0) {};
			\node at (350,-50) [vertex,color=red] (w1) {};
			\node at (275,-50) [vertex,color=red] (w2) {};
			\node at (200,-50) [vertex,color=red] (w3) {};
			\node at (125,-50) [vertex] (w4) {};
			\node at (50,-50) [vertex,color=red] (w5) {};
			
			\node at (500,50) [vertex] (z-1) {};
			\node at (425,50) [vertex] (z0) {};
			\node at (350,50) [vertex] (z1) {};
			\node at (275,50) [vertex] (z2) {};
			\node at (200,50) [vertex] (z3) {};
			\node at (125,50) [vertex] (z4) {};
			\node at (50,50) [vertex,color=red] (z5) {};
			
			\node at (200,120) [vertex,color=red] (w*) {};
			
			\path
			
			(w5) edge (w4)
			(w4) edge[bend left] (w3)
			(w4) edge[bend left] (w2)
			(w4) edge[bend left] (w1)
			(w4) edge[bend left] (w0)
			(w4) edge[bend left] (w-1)
			(w3) edge (w2)
			(w1) edge (w0)
			(w1) edge[bend left] (w-1)
			(w0) edge (w-1)
			
			(z4) edge (w-1)
			(z1) edge (w-1)
			(z0) edge (w-1)
			
			(w4) edge (z-1)
			(w1) edge (z-1)
			(w0) edge (z-1)
			
			(w5) edge (z4)
			(w4) edge (z3)
			(w4) edge (z2)
			(w4) edge (z1)
			(w4) edge (z0)
			(w3) edge (z2)
			(w1) edge (z0)
			
			(z5) edge (w4)
			(z4) edge (w3)
			(z4) edge (w2)
			(z4) edge (w1)
			(z4) edge (w0)
			(z3) edge (w2)
			(z1) edge (w0)
			
			(u1) edge (u2)
			(u1) edge[bend left] (u3)
			(u1) edge[bend left] (u4)
			(u1) edge[bend left] (u5)
			(u2) edge (u3)
			(u2) edge[bend left] (u4)
			(u2) edge[bend left] (u5)
			(u3) edge (u4)
			(u3) edge[bend left] (u5)
			(u4) edge (u5)
			
			(u1) edge (v2)
			(u1) edge (v3)
			(u1) edge (v4)
			(u1) edge (v5)
			(u2) edge (v3)
			(u2) edge (v4)
			(u2) edge (v5)
			(u3) edge (v4)
			(u3) edge (v5)
			(u4) edge (v5)
			
			(v1) edge (u2)
			(v1) edge (u3)
			(v1) edge (u4)
			(v1) edge (u5)
			(v2) edge (u3)
			(v2) edge (u4)
			(v2) edge (u5)
			(v3) edge (u4)
			(v3) edge (u5)
			(v4) edge (u5)
			
			(w*) edge (z-1)
			(w*) edge (z0)
			(w*) edge (z1)
			(w*) edge (z2)
			(w*) edge (z3)
			(w*) edge (z4)
			(w*) edge (z5)

			(u*) edge (v1)
			(u*) edge (v2)
			(u*) edge (v3)
			(u*) edge (v4)
			(u*) edge (v5)
			
			;
		\end{tikzpicture}
		\caption{General position sets (in red) used in the proof of Lemma~\ref{lem:completes}}
		\label{fig:gpsetu^*}
	\end{figure} 
	
	\begin{coro}\label{cor:completes}
		For any graph with order $n \geq 3$, the root vertex $u^*$ of $\mathcal{M}(G)$ belongs to every gp-set of $\mathcal{M}(G)$ if and only if $G$ is isomorphic to $K_n$. For any integer $n\geq 3$, $\gp(\mathcal{M}(K_n)) = n+1$ and the unique gp-set of $\mathcal{M}(K_n)$ is $V\cup \{ u^*\} $.
	\end{coro}
	\begin{proof}
		If $\gp (\mathcal{M}(G)) = n$, then $V^\prime $ is a gp-set of $\mathcal{M}(G)$ that does not contain $u^*$, so we can confine our attention to graphs $G$ with $\gp (\mathcal{M}(G)) \geq n+1$. By Lemma~\ref{lem:completes}, if a graph is such that every gp-set of $\mathcal{M}(G)$ contains $u^*$, then $G$ is either a complete graph or the join of $K_1$ with an independent union of cliques, at least one of which consists of a single vertex (moreover, the general position sets exhibited in the proof would have to be gp-sets). If $G$ has the latter structure and $u$ is a leaf with universal support vertex, then $V^\prime \cup \{ u\} $ is a general position set with the same order. Thus, whether or not the general position sets containing $u^*$ are gp-sets, we see that $u^*$ is not contained in every gp-set of such a graph. 
		
		Now suppose that $G$ is a complete graph. It follows from the proof of Lemma~\ref{lem:completes} that the only general position set of $\mathcal{M}(G)$ containing $u^*$ is $V(G) \cup \{ u^*\} $, which has cardinality $n+1$. Suppose that $S$ is a general position set of $\mathcal{M}(G)$ that does not contain $u^*$. Then there must be a vertex $u$ of $K_n$ such that $u,u^\prime \in S$. Now if $v$ is any vertex of $K_n$ other than $u$, the path $u,v,u^\prime $ is a geodesic, so $v \not \in S$. It follows that $S = V^\prime \cup \{ u\} $. However, in this case if $v,w \in V(K_n) \setminus \{ u\} $, the path $v^\prime ,u,w^\prime $ would be unsound, a contradiction. Thus $V(G) \cup \{ u^*\} $ is the unique gp-set of $\mathcal{M}(G)$. 
	\end{proof}
	The gp-set for $\mathcal{M}(K_5)$ is shown on the left of Figure~\ref{fig:gpsetu^*}. Notice that $\gp (\mathcal{M}(K_n)) = n+1$ is true for $n = 2$, as $\mathcal{M}(K_2) \cong C_5$, but the root vertex $u^*$ is not contained in every gp-set.
	
	It follows from Lemma~\ref{lem:completes} that the general position sets containing $u^*$ can easily be found in polynomial time and that for any non-complete graph $G$ there is a gp-set of $\mathcal{M}(G)$ that does not contain. Thus in the remainder of this article we can safely ignore gp-sets containing $u^*$.
	
	\section{General position sets of $\mathcal{M}(G)$ in terms of partitions of $V(G)$}\label{sec:MGP partitions}
	
	In this section we demonstrate a duality between general position sets of $\mathcal{M}(G)$ that do not contain $u^*$ and certain partitions of $V(G)$ into four (possibly empty) parts. This will allow us to calculate $\gp (\mathcal{M}(G))$ working entirely inside $G$. First we observe the connection between shortest paths in the base graph $G$ and the shortest paths in the Mycielskian $\mathcal{M}(G)$. 
	
	\begin{dfn}
		The \emph{projection} of a path $P = u_0,u_1,\dots, u_{\ell }$ in $V(\mathcal{M}(G))\setminus \{ u^*\} $ is the path with all vertices of $P$ that lie in $V^\prime $ replaced by the corresponding $\mathcal{M}$-twin vertices in $V$. For example in Figure~\ref{fig:Mycielskian of path} the projection of the path $u_0,u_1^\prime ,u_2,u_3,u_4^\prime $ is $u_0,u_1,u_2,u_3,u_4$. If $Q$ is a path in $V(G)$, then an \emph{expansion} of $Q$ is any path in $\mathcal{M}(G)$ with projection equal to $Q$.   	
	\end{dfn}
	
	\begin{obs}\label{shortest paths in mu G} The shortest paths in $\mathcal{M}(G)$ between vertices of $V(\mathcal{M}(G))\setminus \{ u^*\} $ have the following form.
		\begin{itemize}
			\item If $u,v \in V(G)$ and $d_G(u,v) \leq 3$, then a $u,v$-path in $\mathcal{M}(G)$ is a shortest path in $\mathcal{M}(G)$ if and only if it is an expansion of a shortest path in $G$.
			\item If $d_G(u,v) \geq 4$ and $P$ is a shortest $u,v$-path in $\mathcal{M}(G)$, then either $d_G(u,v) = 4$ and $P$ is an expansion of a $u,v$-path of length four in $G$, or else $P$ has the form $u,w^\prime _1,u^*,w^\prime _2,v$, where $u \sim w_1$ and $v \sim w_2$ in $G$. 
			\item If $u \not = v$, then the shortest $u,v^\prime $-paths in $\mathcal{M}(G)$ are either expansions of shortest $u,v$-paths in $G$ or have the form $u,w^\prime ,u^*,v^\prime $, where $u \sim w$ in $G$. 
			\item All shortest $u,u^\prime $-paths in $\mathcal{M}(G)$ have the form $u,w,u^\prime $, where $w \sim u$ in $G$.
		\end{itemize}
	\end{obs}	
	
	With any general position set $S$ of $\mathcal{M}(G)$ that does not contain $u^*$ we associate the partition $\pi (S) = (V_1,V_2,V_3,V_4)$ of $V(G)$ where:
	\begin{itemize}
		\item $V_1 = \{ u \in V(G): u,u^\prime  \in S\}$,
		\item $V_2 = \{ u \in V(G): u \not \in S, u^\prime  \in S\} $,
		\item $V_3 = \{ u \in V(G): u \in S, u^\prime  \not \in S \} $, and
		\item $V_4 = \{ u \in V(G): u,u^\prime  \not \in S\} $.
	\end{itemize}
	Then we have $\gp (\mathcal{M}(G)) = n+n_1-n_4$, where $n$ is the order of the graph and $n_i = |V_i|$ for $i = 1,2,3,4$. We now identify the essential properties of such a partition that guarantee it has the form $\pi (S)$, where $S$ is a general position set of $\mathcal{M}(G)$. We call such a partition an \emph{$\mathcal{MGP}$-partition} (short for `Mycielskian general position').
	\begin{dfn}\label{dfn:Mycielski partition}
		A partition $\pi = (V_1,V_2,V_3,V_4)$ of $V(G)$ into four (possibly empty) sets is an \emph{$\mathcal{MGP}$-partition} if and only if it satisfies the following three conditions:
		\begin{enumerate}
			\item if $e$ is an edge in $\langle V_1 \cup V_3\rangle $, then $e$ has both endpoints in $V_3$, 
			\item if a vertex $u$ in $V_1 \cup V_3$ has a neighbour $v$ in $V_2$, then $d_G(u,w) = 2$ for all $w \in (V_1 \cup V_2) \setminus \{ u,v\} $ and $d_G(u,w) \leq 3$ for all $w \in V_3\setminus\{ u\} $, and 
			\item if $P$ is a shortest path $u_0,u_1,\dots, u_{\ell }$ with length $\ell \leq 4$ in $G$ that passes through three or more vertices of $V_1 \cup V_2 \cup V_3$, then either $u_0,u_{\ell } \in V_2$ and $\ell \geq 3$, or else $P$ or $\widetilde{P}$ has one of the following forms:
			\begin{enumerate}
				\item $u_0,u_1,u_2$, where $u_0,u_1 \in V_2$, 
				\item $u_0,u_1,u_2,u_3$, where $u_0,u_1 \in V_2$, $u_2 \notin V_1 \cup V_2 \cup V_3$ and $u_3 \in V_1 \cup V_3$, or
				\item $u_0,u_1,u_2,u_3,u_4$, where $u_0 \in V_2$, $u_3 \not \in V_1 \cup V_2$ and $u_4 \in V_1 \cup V_3$.
			\end{enumerate}
		\end{enumerate}
	\end{dfn}
	
	Note the following three important properties of $\mathcal{MGP}$-partitions.
	
	\begin{lem}\label{cor:partition}
		If $(V_1,V_2,V_3,V_4)$ is an $\mathcal{MGP}$-partition of $G$, then $V_1$ is an independent set of $G$, $(V_1,V_3) = \emptyset $ and $(V_1,V_2)$ is a matching.
	\end{lem}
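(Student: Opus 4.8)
The plan is to read all three conclusions straight off the three defining conditions of a Mycielski partition, using condition~(1) for the first two assertions and conditions~(2)--(3) for the third.

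First I would dispose of the independence of $V_1$ and of $(V_1,V_3)=\emptyset$ together. If there were an edge $e$ with both endpoints in $V_1$, or with one endpoint in $V_1$ and the other in $V_3$, then $e$ would be an edge of $\langle V_1\cup V_3\rangle$ that does not have both endpoints in $V_3$, contradicting condition~(1). Hence $\langle V_1\rangle$ is edgeless and no edge joins $V_1$ to $V_3$.

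For the matching claim I would argue by contradiction, assuming that two distinct edges of $(V_1,V_2)$ meet at a common vertex. Since each edge of $(V_1,V_2)$ has exactly one endpoint in $V_1$ and one in $V_2$, the shared vertex lies either in $V_1$ or in $V_2$, giving two cases. If some $u\in V_1$ has two distinct neighbours $v_1,v_2\in V_2$, then applying condition~(2) to $u$ and $v_1$ forces $d(u,v_2)=2$, which is impossible because $u\sim v_2$. If instead some $v\in V_2$ has two distinct neighbours $u_1,u_2\in V_1$, then $u_1\not\sim u_2$ by the independence of $V_1$ just established, so $P\colon u_1,v,u_2$ is a geodesic of length two whose endpoints and interior vertex all lie in $V_1\cup V_2\cup V_3$; condition~(3) must therefore apply to $P$. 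But the first alternative of~(3) needs $u_1,u_2\in V_2$ and $\ell(P)\ge 3$, forms (i) and (ii) have length greater than two, and form (iii) requires the first two vertices of $P$ (or of its reverse) to lie in $V_2$, whereas here the first vertex lies in $V_1$. Every possibility is excluded, so $(V_1,V_2)$ is a matching.

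The only step with any subtlety is the second subcase of the matching argument: it is the one place where the three conclusions interact, since it uses the already-proved independence of $V_1$ to produce the length-two geodesic $u_1,v,u_2$ and then needs a clause-by-clause check that condition~(3) rules this geodesic out. The remaining steps are immediate from the definitions, so I would keep the written proof short.
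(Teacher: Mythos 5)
Your proof is correct and follows essentially the same route as the paper: both read the first two claims off Condition~1, bound the $V_2$-neighbours of a vertex of $V_1$ via Condition~2, and rule out a common $V_2$-endpoint by showing the resulting length-two geodesic $u_1,v,u_2$ is forbidden by Condition~3. Your clause-by-clause check of Condition~3 is just a more explicit version of the paper's one-line appeal to that condition.
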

	\begin{proof}
		That $V_1$ is independent and $(V_1,V_3) = \emptyset $ follows immediately from Condition 1 in Definition~\ref{dfn:Mycielski partition}. We now show that $(V_1,V_2)$ is a matching. By Condition 2 of Definition~\ref{dfn:Mycielski partition}, a vertex $u \in V_1$ can be at distance one from at most one vertex of $V_2$, so each vertex of $V_1$ has at most one neighbour in $V_2$. Suppose now that there is a vertex $w \in V_2$ with distinct neighbours $u,v \in V_1$. As $V_1$ is independent, $u,w,v$ would be a shortest path in $G$ of the form forbidden by Condition 3. It follows that the edges in $(V_1,V_2)$ are independent.  
	\end{proof}
	
	Given an $\mathcal{MGP}$-partition $\pi = (V_1,V_2,V_3,V_4)$, we associate the subset $\sigma (\pi )$ given by $V_1 \cup V_1^\prime \cup V_2^\prime \cup V_3$.
	
	\begin{lem}
		If $S$ is a general position set of $\mathcal{M}(G)$ that does not contain $u^*$, then $\pi (S)$ is an $\mathcal{MGP}$-partition.
	\end{lem}
	\begin{proof}
		We verify the three properties from Definition~\ref{dfn:Mycielski partition} in turn.
		\newline
		\textbf{Condition 1: }  If $u \in V_1$, $v \in V_1 \cup V_3$ and $u \sim v$ in $G$, then the shortest path $u,v,u^\prime $ in $\mathcal{M}(G)$ would be unsound. Thus any edge of $G$ in $\langle V_1 \cup V_3\rangle $ has both endpoints in $V_3$.
		
		\textbf{Condition 2: } Suppose that a vertex $u \in V_1 \cup V_3$ has a neighbour $v \in V_2$. By Condition 1, $u$ has no neighbours in $V_1$. Furthermore, if $u$ had a second neighbour $v_2 \not = v$ in $V_2$, then the path $v^\prime ,u,v_2^\prime $ would be unsound. Thus $d_G(u,w) \geq 2$ for each $w \in (V_1 \cup V_2)\setminus \{u,v\} $. If there is a $w \in (V_1 \cup V_2) \setminus \{ u,v\} $ such that $d_G(u,w) \geq 3$, then $u,v^\prime ,u^*,w^\prime $ would be an unsound geodesic. Therefore $d_G(u,w) = 2$ for all $w \in (V_1 \cup V_2)\setminus \{ u,v\} $. Also if $w \in V_3 \setminus \{ u\} $ is such that $d_G(u,w) \geq 4$, then $u,v^\prime ,u^*,w_1^\prime ,w$ would be an unsound path, where $w_1$ is any neighbour of $w$ in $G$. 
		
		\textbf{Condition 3: } Suppose that $P$ is a geodesic $u_0,u_1,\dots ,u_{\ell }$ in $G$ with length $\ell \leq 4$ that has both endpoints and an internal vertex in $V_1 \cup V_2 \cup V_3$. If $P$ has both endpoints in $V_1 \cup V_3$, then $P$ has an unsound expansion that is a geodesic in $\mathcal{M}(G)$, so we can assume that the initial vertex $u_0$ of $P$ is in $V_2$. Suppose firstly that $\ell = 2$. Then we must have $u_1 \in V_2$, for if $u_1 \in V_1 \cup V_3$, then $u_0^\prime ,u_1,u_2$ is unsound if $u_2 \in V_1 \cup V_3$ and $u_0^\prime ,u_1,u_2^\prime $ is unsound if $u_2 \in V_2$. 
		
		Now suppose that $\ell = 3$ and $u_3 \in V_1 \cup V_3$. As $u_0^\prime ,u_1,u_2,u_3$ and $u_0^\prime ,u^*,u_2^\prime ,u_3$ are shortest paths in $\mathcal{M}(G)$, we see that $u_1,u_2 \not \in V_1 \cup V_3$ and $u_2 \not \in V_2$, so that $P$ must have the form in variant b) of Condition 3. Lastly, if $\ell = 4$ and $u_4 \in V_1 \cup V_3$, then $u_0^\prime ,u^*,u_3^\prime ,u_4$ is a shortest path in $\mathcal{M}(G)$, so that $u_3 \not \in V_1 \cup V_2$ and $P$ has the form of variant c). 
	\end{proof}

	We now complete our characterisation of gp-sets of Mycielskians of graphs. 
	
	\begin{thm}\label{thm:characterisation}
		Let $\pi = (V_1,V_2,V_3,V_4)$ be an $\mathcal{MGP}$-partition of $G$. Then $\sigma (\pi ) = V_1 \cup V_1^\prime \cup V_2^\prime \cup V_3$ is a general position set of $\mathcal{M}(G)$. 
	\end{thm}
	\begin{proof}
		We need to verify that all shortest paths between vertices of $S = \sigma (\pi )$ are sound. First let $u,v \in V_1 \cup V_3$. By Condition 3 of Definition~\ref{dfn:Mycielski partition}, no expansion of a shortest $u,v$-path in $G$ with length at most four contains a third vertex of $S$. Furthermore, if $d_G(u,v) \geq 4$, then by Observation~\ref{shortest paths in mu G} the only other geodesics to consider are the paths of the form $u,w^\prime _1,u^*,w^\prime _2,v$, where $u \sim w_1$ and $v \sim w_2$ in $G$. By Condition 1 of Definition~\ref{dfn:Mycielski partition} neither of the vertices $w_1$ or $w_2$ is in $V_1$. Hence we can assume that $w_1 \in V_2$. Then by Condition 2 we would have $d_G(u,v) \leq 3$, so that $u,w^\prime _1,u^*,w^\prime _2,v$ would not be a shortest path. 
		
		Now suppose that $u^\prime ,v^\prime  \in V_1^\prime  \cup V_2^\prime $. A shortest $u^\prime ,v^\prime $-path $P$ in $\mathcal{M}(G)$ has length two and $P$ is unsound only if it passes through a common neighbour of $u$ and $v$ in $V_1 \cup V_3$. By Condition 2 and Lemma~\ref{cor:partition} this is impossible. 
		
		We can thus assume that $P$ is an unsound geodesic with one endpoint in $V_1 \cup V_3$ and the other in $V_1^\prime \cup V_2^\prime $. If $u \in V_1$, the $u,u^\prime $-geodesics in $\mathcal{M}(G)$ are the paths $u,w,u^\prime $, where $u \sim w$ in $G$. By Lemma~\ref{cor:partition} the vertex $w$ does not lie in $S$. Hence the endpoints of $P$ are not $\mathcal{M}$-twins of each other. Let us denote the endpoints by $u,v^\prime $, where $u \in V_1 \cup V_3$, $v \in V_1 \cup V_2$ and $u \not = v$. If $\ell (P) = 2$, then by Condition 1 of Definition~\ref{dfn:Mycielski partition} we must have $u \in V_3$, $v \in V_2$ and the internal vertex of $P$ lies in $V_3$. However, this form of shortest path is not allowed by Condition 3.
		
		Hence $\ell (P) = 3$. By Observation~\ref{shortest paths in mu G}, the $u,v^\prime $-geodesics in $\mathcal{M}(G)$ are either expansions of shortest paths of length three in $G$, or else have the form $u,w^\prime ,u^*,v^\prime $, where $u \sim w$ in $G$. By Condition 3 the former geodesics are sound and the latter are sound by Condition 2. This shows that all shortest paths in $\mathcal{M}(G)$ are sound and thus $S$ is in general position.   
	\end{proof}
	Since both maps $\pi $ and $\sigma $ are one-to-one, this completes the proof of the claimed duality between general position sets of $\mathcal{M}(G)$ not containing $u^*$ and $\mathcal{MGP}$-partitions of $G$.
	
	\begin{coro}\label{cor:characterisation}
		Let $G$ be a non-complete graph with order $n$. Then $\gp (\mathcal{M}(G))$ is the largest value of $n+n_1-n_4$ over all $\mathcal{MGP}$-partitions $(V_1,V_2,V_3,V_4)$ of $G$, where $n_i = |V_i|$ for $1 \leq i \leq 4$. 
	\end{coro}

	\section{Bounds, extremal cases and exact values}\label{General bound}
	
	In this section we use Corollary~\ref{cor:characterisation} to derive tight upper and lower bounds for $\gp (\mathcal{M}(G))$ and characterise the case of equality with the upper bound. We start with the lower bound. Recall that $\ip (G)$ is the number of vertices in a largest independent set that is also in general position and $\gp _d(G)$ is the number of vertices in a largest subset $S \subseteq V(G)$ such that no shortest path with length at most $d$ passes through three or more vertices in $S$. To derive bounds for $\gp (\mathcal{M}(G))$ we combine these concepts in the following definition.
	
	\begin{dfn}
		An \emph{independent $d$-position set} is a subset $S \subseteq V(G)$ such that $S$ is an independent set and no shortest path of length at most $d$ passes through three or more vertices of $S$. We denote that largest number of vertices in an independent $d$-position set by $\ip _d(G)$.
	\end{dfn}
	For any graph $G$ these parameters satisfy the inequalities $\ip (G) \leq \gp (G)$ and $\ip (G) \leq \ip _4(G) \leq \alpha (G)$.
	\begin{coro}\label{thm:lower bound}
		For any graph $G$ with order $n \geq 3$,
		\[ \gp (\mathcal{M}(G)) \geq \max \{ n,2\ip _4(G)\} .\] 
	\end{coro}
	\begin{proof}
		The set $V^\prime $ is trivially in general position, implying that $\gp (\mathcal{M}(G)) \geq n$ (this is equivalent to using the $\mathcal{MGP}$-partition $V_2 = V(G)$). Let $A$ be a largest independent $4$-position set of $G$. Setting $V_1 = A$ and $V_4 = V(G)\setminus A$ gives an $\mathcal{MGP}$-partition of $G$, yielding $\gp (\mathcal{M}(G)) \geq 2\ip _4(G)$ by Corollary~\ref{cor:characterisation}. 
	\end{proof}
	We call any non-complete graph $G$ with order $n \geq 3$ and $\gp (\mathcal{M}(G)) = \max \{ n,2\ip _4(G)\}$ \emph{meagre}. Otherwise $G$ is \emph{abundant}. We now show that the lower bound in Corollary~\ref{thm:lower bound} is tight by exhibiting two families of meagre graphs: graphs with large size, joins of $K_1$ with disjoint unions of cliques of cardinality at least three, and complete multipartite graphs. 
	
	\begin{lem}\label{lem:abundant,V_4 empty}
		If $G$ is an abundant graph with order $n \geq 3$ and $V_4 = \emptyset $, then $V_3 = \emptyset $, $|V_1| = 1$ and the vertex of $V_1$ is a leaf with support vertex that is a universal vertex. If $G$ has a leaf with a universal support vertex, then $\gp (\mathcal{M}(G)) \geq n+1$.
	\end{lem}
	\begin{proof}
		Suppose that $G$ is abundant and has $\mathcal{MGP}$-partition $(V_1,V_2,V_3,\emptyset )$, where $n_1 \geq 1$. Suppose that $V_3 \not = \emptyset $. As we are dealing with connected graphs, each vertex of $V_1$ has a path to $V_3$. Therefore, by Lemma~\ref{cor:partition}, the matching $(V_1,V_2)$ saturates $V_1$. Then by Condition 2 of Definition~\ref{dfn:Mycielski partition} we have $d(u,w) \leq 3$ for each $u \in V_1$ and $w \in V_3$. Therefore there is a shortest path from some $u \in V_1$ to a $w \in V_3$ with length two or three with all internal vertices in $V_2$. However, the existence of such a path contradicts Condition 3 of Definition~\ref{dfn:Mycielski partition}. Thus $V_3 = \emptyset $ and $V(G) = V_1 \cup V_2$. Thus each vertex of $V_1$ is a leaf.
		
		By Condition 2 of Definition~\ref{dfn:Mycielski partition} any vertex of $V_1$ is at distance two from every vertex of $V_2$, with the exception of its support vertex. Therefore any pair of vertices from $V_1$ are at distance three from each other, with shortest path passing through $V_2$. This contradicts Condition 3 of Definition~\ref{dfn:Mycielski partition} unless $n_1 = 1$. By Condition 2 of Definition~\ref{dfn:Mycielski partition} it follows that the support vertex of the vertex of $V_1$ is universal. 
		
		Conversely, if $u \in V(G)$ is a leaf with a universal support vertex, then $(\{ u\} ,V(G)\setminus \{ u\} ,\emptyset ,\emptyset )$ is an $\mathcal{MGP}$-partition and $n+n_1-n_4 = n+1$.   
	\end{proof}

	\begin{thm}
		For $n \geq 3$, the largest size of a graph with order $n$ that satisfies $\gp(\mathcal{M}(G)) > n$ is ${n \choose 2}+1$. Any abundant graphs with size ${n \choose 2}+1$ is either a clique $K_{n-1}$ with a leaf attached, the gem graph $K_1 \vee P_4$, or $3K_1 \vee K_2$. The latter graph is not abundant, for the former two are. 
	\end{thm}
	\begin{proof}
		Let $G$ be an abundant graph with order $n$ and size at least ${n \choose 2}+1$. Let $(V_1,V_2,V_3,V_4)$ be an $\mathcal{MGP}$-partition corresponding to a gp-set of $\mathcal{M}(G)$. By Lemma~\ref{cor:partition} each vertex $u$ of $V_1$ has degree at most $1+n_4$ if $u$ has an edge to $V_2$ and degree at most $n_4$ otherwise. By Lemma~\ref{lem:abundant,V_4 empty}, if $n_1 \geq 2$, then $n_1 > n_4 > 0$.
		
		Suppose that $n_1 \geq 3$ and let $u_1,u_2,u_3 \in V_1$. Let $t= |(\{ u_1,u_2,u_3\} ,V_2)|$.  Then, taking care not to count missing edges between $u_1$, $u_2$ and $u_3$ twice, we see that there are at least \[ 3(n-n_4-1)-t-3 = (n-2)+(2n-3n_4-t-4)\] missing edges in $G$. As $n_2 \geq t$ and $n_1 > n_4$, we have
		\[ 2n-3n_4-t-4 = 2n_1+2n_2+2n_3-n_4-t-4 \geq n_1+n_2+2n_3-3 \geq 0.\] Moreover, for equality to hold, we would need $n_1 = 3, n_2 = n_3 = 0, n_4 = 2$. As all other edges must be present, $G$ is isomorphic to $3K_1 \vee K_2$ and $V_1$ is an independent $4$-position set, so that we would have $\gp (\mathcal{M}(G)) = 2\ip _4(G)$ and $G$ would not be abundant.
		
		Now suppose that $n_1 = 2$, where $V_1 = \{ u_1,u_2\} $. Hence $n_4 = 1$ and $n = n_2+n_3+3$. Set $t = |(V_1,V_2)|$. There are at least $2n_2+2n_3+1-t$ missing edges with endpoints in $V_1$. We have
		\[ 2n_2+2n_3+1-t = (n-2)+(n_2+n_3-t) \geq n-2,\] since $n_2 \geq t$. To have equality, we must have $n_2 = t$ and $n_3 = 0$. As all other edges must be present, if $t = 0,1$, then $G$ is a clique with a leaf attached. If $t = 2$, then $G$ is isomorphic to the gem graph, which is readily verified to be abundant, since its independence number is two. A gp-set of the Mycielskian of the gem graph is shown in Figure~\ref{fig:gem}.
		
		Finally if $n_1 = 1$, then we must have $n_4 = 0$ and Lemma~\ref{lem:abundant,V_4 empty} tells us that the vertex of $V_1$ is a leaf. The edges missing to the leaf account for all $n-2$ missing edges, so $G$ is a clique with a leaf attached as claimed.
	\end{proof}

	\begin{figure}
		\centering
		\begin{tikzpicture}[x=0.2mm,y=-0.2mm,inner sep=0.2mm,scale=0.8,thick,vertex/.style={circle,draw,minimum size=10,fill=lightgray}]
			\node at (-200,-50) [vertex,color=red] (u1) {};
			\node at (-100,-50) [vertex,color=red] (u2) {};
			\node at (0,-50) [vertex] (u3) {};
			\node at (100,-50) [vertex] (u4) {};
			\node at (200,-50) [vertex] (u5) {};
			
			\node at (-200,50) [vertex,color=red] (v1) {};
			\node at (-100,50) [vertex,color=red] (v2) {};
			\node at (0,50) [vertex,color=red] (v3) {};
			\node at (100,50) [vertex,color=red] (v4) {};
			\node at (200,50) [vertex] (v5) {};
			
			\node at (0,120) [vertex] (u*) {};
			
			\path
			
			(u1) edge[bend left] (u3)
			(u2) edge[bend left] (u4)
			(u1) edge[bend left] (u5)
			(u2) edge[bend left] (u5)
			(u3) edge (u4)
			(u4) edge (u5)
			(u3) edge[bend left] (u5)
			
			(v1) edge (u3)
			(v2) edge (u4)
			(v1) edge (u5)
			(v2) edge (u5)
			(v3) edge (u4)
			(v4) edge (u5)
			(v3) edge (u5)
			
			(u1) edge (v3)
			(u2) edge (v4)
			(u1) edge (v5)
			(u2) edge (v5)
			(u3) edge (v4)
			(u4) edge (v5)
			(u3) edge (v5)
			
			(u*) edge (v1)
			(u*) edge (v2)
			(u*) edge (v3)
			(u*) edge (v4)
			(u*) edge (v5)
			
			;
		\end{tikzpicture}
		\caption{A gp-set (in red) of the Mycielskian of the gem graph}
		\label{fig:gem}
	\end{figure}

	\begin{thm}
		Let $K_{r_1,r_2,\dots,r_k}$ be a complete $k$-partite graph, where $2 \leq k < n = r_1+\dots +r_k$ and $r_1 \geq r_2 \geq \dots \geq r_k$. Then\[ \gp (\mathcal{M}(K_{r_1,r_2,\dots,r_k}))  = \max \{ n,2r_1\} .\]
	\end{thm}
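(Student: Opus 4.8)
The plan is to prove the two inequalities $\gp(\mu(G)) \ge \max\{n,2r_1\}$ and $\gp(\mu(G)) \le \max\{n,2r_1\}$ separately, writing $G = K_{r_1,r_2,\dots,r_k}$ and using throughout the elementary fact that two vertices of a complete multipartite graph are non-adjacent precisely when they lie in the same part. Note first that $k < n$ guarantees $G$ is non-complete, so that Theorem~\ref{thm:characterisation} and Corollary~\ref{thm:lower bound} apply, and moreover forces some part to have size at least two, i.e. $r_1 \ge 2$.

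For the lower bound I would identify $\alpha_{gp}(G)$. Every independent set of $G$ is contained in a single part, so $\alpha(G) = r_1$; and since $G$ has diameter two and every geodesic between two vertices of a common part passes through a vertex of a different part, any subset of a part is automatically in general position in $G$. Hence a largest part witnesses $\alpha_{gp}(G) = r_1$, and Corollary~\ref{thm:lower bound} gives $\gp(\mu(G)) \ge \max\{n,2r_1\}$.

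For the upper bound I would take an arbitrary Mycielski partition $(V_1,V_2,V_3,V_4)$ of $G$ with $n_i = |V_i|$ and show $n + n_1 - n_4 = 2n_1 + n_2 + n_3 \le \max\{n,2r_1\}$; by Theorem~\ref{thm:characterisation} this suffices. If $V_1 = \emptyset$ the quantity equals $n - n_4 \le n$, so assume $V_1 \ne \emptyset$. By Lemma~\ref{cor:partition}, $V_1$ is independent, hence lies in a single part $A$; since $(V_1,V_3) = \emptyset$ and distinct parts are completely joined, $V_3 \subseteq A$ as well. The decisive observation concerns $V_2$: every vertex outside $A$ is adjacent to every vertex of $V_1$, while $(V_1,V_2)$ is a matching, so $V_2 \subseteq A$ whenever $|V_1| \ge 2$, and $V_2$ contains at most one vertex outside $A$ when $|V_1| = 1$. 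In the first case $V_1 \cup V_2 \cup V_3 \subseteq A$ yields $2n_1 + n_2 + n_3 \le n_1 + |A| \le 2r_1$; in the second case, disjointness inside $A$ gives $n_2 + n_3 \le |A| \le r_1$, which together with $n_1 = 1$ yields $2n_1 + n_2 + n_3 \le r_1 + 2 \le 2r_1$. Combining with the lower bound gives the claimed equality.

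The routine part is the size bookkeeping; the one place requiring care — and where the hypothesis $k < n$ is genuinely used — is the borderline case $|V_1| = 1$, where the direct estimate only gives $r_1 + 2$ and one must invoke $r_1 \ge 2$ to absorb it into $2r_1$. I do not expect any substantial obstacle beyond tracking the part-membership of $V_1$, $V_2$ and $V_3$, which is forced entirely by Lemma~\ref{cor:partition} and the complete-multipartite structure.
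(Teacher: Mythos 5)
Your proof is correct and takes essentially the same route as the paper's: the lower bound comes from Corollary~\ref{thm:lower bound} with a largest part as the independent general position set, and the upper bound from Theorem~\ref{thm:characterisation} together with Lemma~\ref{cor:partition}, which confine $V_1$, $V_3$ and (all but at most one vertex of) $V_2$ to a single partite set. Your explicit case split on $|V_1|=1$ versus $|V_1|\geq 2$ is just a slightly more careful bookkeeping of the same argument.
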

	\begin{proof}
		Since any partite set of $G = K_{r_1,r_2,\dots,r_k}$ is an independent position set, by Corollary~\ref{thm:lower bound} we have $\gp (\mathcal{M}(G))  = \max \{ n,2r_1\} $. Suppose for a contradiction that $(V_1,V_2,V_3,V_4)$ is an $\mathcal{MGP}$-partition of $G$ with $n_i = |V_i|$ for $i = 1,2,3,4$ corresponding to a gp-set of $\mathcal{M}(G)$ containing more than $\max \{ n,2r_1\}$ vertices.
		
		As $V_1$ is an independent set, it must be contained entirely within some partite set $X$ of $G$. By Lemma~\ref{cor:partition} there are no edges from $V_1$ to $V_3$, so we have $V_1 \cup V_3 \subseteq X$. If $V_2$ lies entirely in $X$, then $\gp (\mathcal{M}(K_{r_1,r_2,\dots,r_k})) = 2n_1+n_2+n_3\leq 2r_1$, so assume that there is a vertex $y \in V_2\setminus X$. Then the vertex $y$ is adjacent to every vertex of $X$. As $(V_1,V_2)$ is a matching by Lemma~\ref{cor:partition}, we must have $n_1 = 1$ and hence $n_4 = 0$. It now follows from Lemma~\ref{lem:abundant,V_4 empty} that the vertex of $V_1$ is a leaf, so there is just one vertex of $G$ outside $X$ and hence $G$ is a star, i.e. a tree of order $n$ with a universal vertex. The associated general position set of $\mathcal{M}(G)$ contains $n+1$ vertices. However, for $n \geq 3$ this would be smaller than $2r_1 = 2(n-1)$, a contradiction. It follows that the graph is meagre.
	\end{proof}

	\begin{thm}\label{thm:joinofK1withunionofcliques}
		Let $G$ be the join of $K_1$ with with a disjoint union of $t \geq 2$ cliques $W = \dot {\bigcup } _{i = 1}^t W_i$. Suppose that $W$ contains $t_1$ cliques of order at most two and $t_2$ cliques of order at least three. Then $\gp (\mathcal{M}(G)) = n+t_1$.
	\end{thm}
	\begin{proof}
		Let $(V_1,V_2,V_3,V_4)$ be an $\mathcal{MGP}$-partition corresponding to a gp-set $S$ of $\mathcal{M}(G)$ not containing the root $u^*$. It follows from Lemma~\ref{lem:completes} that $|S| \geq n+1$, so we can assume that $n_1 > n_4$. Let $u$ be the universal vertex of $G$. If $u \in V_3$, then by Definition~\ref{dfn:Mycielski partition} $V_1$ is empty, contradicting $|S| > n$. If $u \in V_1$, then by Condition 1 of Definition~\ref{dfn:Mycielski partition} there are no further vertices in $V_1 \cup V_3$. By Condition 3, there cannot be vertices of $V_2$ in two different cliques of $W$, so $V_4$ is non-empty and we would have $|S| \leq n$, a contradiction. If $u \in V_2$, then, assuming that $W_1$ contains a vertex $v$ of $V_1$, Definition~\ref{dfn:Mycielski partition} shows that $G$ contains no further vertices of $V_1$, so $V_4 = \emptyset $ and Lemma~\ref{lem:abundant,V_4 empty} shows that $|W_1| = 1$ and $|S| = n+1$. Hence we can assume that $u \in V_4$.
		
		Any clique $W_i$ contains at most one vertex of $V_1$, so can contribute at most one to the difference $n_1-n_4$. By Definition~\ref{dfn:Mycielski partition}, if any of the $t_2$ cliques with order at least three contains a vertex of $V_1$, then it must also contain a vertex of $V_4$, and so cannot contribute to $n_1-n_4$. Hence $|S| \leq n+t_1$. Conversely, setting $V_4 = \{ u\} $, choosing one vertex from each of the $t_1$ cliques of order at most two to be in $V_1$ and setting all other vertices of $W$ to belong to $V_2$ gives an $\mathcal{MGP}$-partition. Comparing the cardinality of the general position sets obtained, including those containing the root $u^*$ from Lemma~\ref{lem:completes}, we conclude that $\gp (\mathcal{M}(G)) = n+t_1$.
	\end{proof}
	Therefore we obtain a family of meagre graphs by letting all of the cliques in $W$ have order at least three.
	\begin{coro}\label{cor:chargpsetswithu^*}
		A graph $G$ has a gp-set containing the root vertex $u^*$ if and only if $G$ is a clique or $G$ is the join of $K_1$ with a disjoint union of $t \geq 2$ cliques $W_1,\dots ,W_t$, where one of the cliques has order one and all of the other cliques have order at least three.
	\end{coro}
	
	Later in the paper we will see other examples of meagre graphs: paths (Lemma~\ref{lem:meagre tree}), all cycles apart from $C_3$ and $C_5$ (Theorem~\ref{thm: cycles}), all cubic graph with two exceptions (Theorem~\ref{thm:cubic graphs}) and in general all sufficiently large regular graphs (Theorem~\ref{thm:largeregulargraphsaremeagre}). It also follows from Corollary~\ref{bound} below that for $n \geq 2$ the star graph $S_n$ is meagre (in fact for this graph the bounds in Corollaries~\ref{thm:lower bound} and~\ref{bound} coincide). We now derive a tight upper bound on $\gp (\mathcal{M}(G))$ in terms of the independence number $\alpha (G)$.
	
	\begin{thm}\label{thm:upperbound}
		For any non-complete graph $G$ with order $n$ and minimum degree $\delta \geq 1$ we have
		\begin{equation}\label{eqn:upper bound}
			\gp(\mathcal{M}(G)) \leq n+\max\{ 0,\ip _4(G)-\delta +1\} .
		\end{equation} 
	\end{thm}
	\begin{proof}
		Let $(V_1,V_2,V_3,V_4)$ be the $\mathcal{MGP}$-partition associated with a gp-set $S$ of $\mathcal{M}(G)$. If $V_1$ is empty, then $\gp(\mathcal{M}(G)) \leq n$, so assume that $V_1 \not = \emptyset $. It follows from the conditions in Definition~\ref{dfn:Mycielski partition} and Lemma~\ref{cor:partition} that $V_1$ is an independent 4-position set of $G$. Thus $n_1 \leq \ip _4(G)$. Also by Lemma~\ref{cor:partition}, any vertex in $V_1$ has at most one neighbour in $V_2$ and hence has at least $\delta - 1$ neighbours in $V_4$. Thus $n_4 \geq \delta - 1$ and so $\gp(\mathcal{M}(G)) = n+n_1-n_4 \leq n+\ip _4(G)-\delta +1$. To meet this bound $V_1$ must be a largest independent 4-position set of $G$, which implies that $n_1 \geq 2$, and each vertex of $V_1$ must have a neighbour in $V_2$, so by Condition 2 of Definition~\ref{dfn:Mycielski partition} any pair of vertices in $V_1$ has a common neighbour in $V_4$, implying that $\delta \geq 2$. Thus if $\delta = 1$ we can improve the bound to $\gp(\mathcal{M}(G)) \leq n+\ip _4(G)-1$.
	\end{proof}
	Theorem~\ref{thm:upperbound} also gives an upper bound in terms of the more familiar independence number.
	
	\begin{coro}\label{bound}
		For any non-complete graph $G$, $\gp (\mathcal{M}(G)) \leq n + \alpha -1$.
	\end{coro}
	
	We now characterise the graphs that meet these upper bounds.
	
	\begin{thm}\label{thm:n+ip_4-delta+1characterisation}
		An abundant graph $G$ satisfies $\gp (\mathcal{M}(G)) = n+\ip _4(G)-\delta +1$ if and only if $V(G)$ can be partitioned into three sets $V_1$, $V_2$ and $V_4$ such that 
		\begin{itemize}
			\item $|V_1| = |V_2| > |V_4| = \delta - 1$,
			\item $V_1$ is an independent set,
			\item $(V_1,V_2)$ is a matching,
			\item the vertices of $V_2$ have degree at least $\delta $, and
			\item any vertex of $V_2$ that is not universal in $\langle V_2 \rangle $ has an edge to $V_4$.
		\end{itemize}
	\end{thm}
	\begin{proof}
		Suppose that $G$ meets the upper bound in Theorem~\ref{thm:upperbound}. Let $(V_1,V_2,V_3,V_4)$ be an $\mathcal{MGP}$-partition associated with a gp-set of $G$. The proof of Theorem~\ref{thm:upperbound} shows that $V_1$ is a largest independent 4-position set, the matching $(V_1,V_2)$ saturates $V_1$, each vertex of $V_1$ has degree $\delta $ and $n_4 = \delta -1$. Write $V_1 = \{ u_1,\dots ,u_r\} $ and $V_2 = \{ v_1,\dots ,v_s\} $, where $s \geq r$ and $u_i \sim v_i$ for $1 \leq i \leq s$. Hence for $1 \leq i \leq r$ we have $N_G(u_i) = \{ v_i\} \cup V_4$. 
		
		We have $\ip _4(G) \geq 2$ for any non-complete graph with order $n \geq 3$, so $n_1 \geq 2$. By Condition 2 of Definition~\ref{dfn:Mycielski partition}, any pair of vertices in $V_1$ has a common neighbour in $V_4$, implying that $\delta \geq 2$. Thus $V_4$ is non-empty.
		
		Consider the vertices of $V_3$. As $V_1$ is a maximum independent 4-position set and $(V_1,V_3) = \emptyset $, it follows for that any vertex $w$ of $V_3$ there is a shortest path $P$ of length four containing $w$ and two vertices of $V_1$. Observe that any pair of vertices from $V_1$ are at distance two from each other, since they have a common neighbour in $V_4$. Also by Condition 2 of Definition~\ref{dfn:Mycielski partition} each vertex of $V_1$ is at distance at most three from any vertex of $V_3$. It follows that there is no such path $P$ and $V_3 = \emptyset $. Similarly, if $s > r$ and $v$ is a vertex of $\{ v_{r+1},\dots ,v_s\} $, then there would have to be a shortest path in $G$ of length four that contains $v$ and two vertices of $V_1$. This is impossible, since the vertices of $V_1$ are at distance two apart and by Condition 2 of Definition~\ref{dfn:Mycielski partition} each vertex of $V_1$ is at distance two from $v$. Therefore $r = s$.  
	\end{proof}

	\begin{coro}\label{n+alpha-1 characterisation}
		A non-complete graph $G$ with order $n \geq 3$ has $\gp(\mathcal{M}(G)) = n+\alpha(G)-1$ if and only if $G$ is either a clique $K_{n-1}$ with a leaf attached, or has the following structure:
		\begin{itemize}
			\item $V(G) = V_{1,1} \cup V_{1,2} \cup V_2 \cup \{ x\}$,
			\item $V_{1,1} \cup V_{1,2}$ is an independent set,
			\item $x$ is adjacent to every vertex of $V_{1,1} \cup V_{1,2}$,
			if $V_{1,1} \not = \emptyset $, then $x$ is a universal vertex,
			\item $(V_{1,2},V_2)$ is a perfect matching, and
			\item any vertex of $V_2$ that is not universal in $\langle V_2 \rangle $ is adjacent to $x$.
		\end{itemize}
	\end{coro}
	\begin{proof}
		It follows from the proof of Theorem~\ref{thm:n+ip_4-delta+1characterisation} that if any vertex of $V_1$ has degree at least three, then
		\[ \gp (\mathcal{M}(G)) \leq n+\ip _4(G) -3+1 \leq n +\alpha (G)-2.\]
		Hence we can assume that each vertex of $V_1$ has degree one or two. To have $\gp (\mathcal{M}(G)) = n+\alpha -1$, either a) $V_1$ is a maximum independent set and $n_4 = 1$, or b) $V_1$ is an independent set of order $\alpha - 1$ and $V_4 = \emptyset $. 
		
		In case b) we have $\alpha = 2$ and Lemma~\ref{lem:abundant,V_4 empty} tells us that there are vertices $u,v \in V(G)$ such that $u$ is a leaf and $v$ is a universal vertex, and $V_1 = \{ u\} $, $V_2 = V(G) \setminus \{ u\} $ and $V_3 = \emptyset $. Also, if there is any pair of vertices $w,z \in V_2 \setminus \{ v\} $ that are non-adjacent, then the independence number of $G$ would be at least three. Thus $G$ is a clique $K_{n-1}$ with a leaf attached, which meets the upper bound $n+\alpha -1$ by Theorem~\ref{thm:joinofK1withunionofcliques}.
		
		Now consider option b). As $V_1$ is a maximum independent set, every vertex of $V_2 \cup V_3 \cup V_4$ has an edge to $V_1$. By Definition~\ref{dfn:Mycielski partition} it follows that $V_3 = \emptyset $ and the matching $(V_1,V_2)$ saturates $V_2$. Let $x$ be the vertex of $V_4$. If $V_2 = \emptyset $, then each vertex of $V_1$ is a leaf attached to $x$ and $G$ is a star. The $n-1$ leaves of the star form an independent 4-position set, so by Corollary~\ref{thm:lower bound} the star meets the upper bound $n+\alpha -1$. Hence assume that $V_2 \not = \emptyset $.
		
		We split $V_1$ into three parts $V_{1,1}$, $V_{1,2}$ and $V_{1,3}$, where 
		\begin{itemize}
			\item $V_{1,1}$ are the vertices of $V_1$ adjacent only to $x$,
			\item $V_{1,2}$ are the vertices of $V_1$ with degree two, and
			\item $V_{1,3}$ are the vertices of $V_1$ adjacent only to a vertex $V_2$.
		\end{itemize}
		As the matching $(V_1,V_2)$ saturates $V_2$, there is a perfect matching between $V_{1,2} \cup V_{1,3}$ and $V_2$. If $u \in V_{1,3}$, then there is no path of length two from $u$ to any vertex of $V_1 \setminus \{ u\} $, so that we would have $V_1 = \{ u\} $ by Condition 2 of Definition~\ref{dfn:Mycielski partition}. As $(V_{1,2} \cup V_{1,3},V_2)$ is a perfect matching, $V_2$ would also have just one vertex and $G$ must be the path $P_3$. Upon setting $V_2 = \emptyset $ and $|V_{1,1}| = 2$ in the statement of the corollary, $P_3$ has the claimed form. Hence we can now assume that $V_{1,3} = \emptyset $. 
		
		Suppose that there is a pair of vertices $v_1,v_2$ of $V_2$ such that $v_1 \not \sim v_2$, and let $u_1,u_2$ be the neighbours of $v_1,v_2$ in $V_1$ respectively. Then by Condition 2 of Definition~\ref{dfn:Mycielski partition} we have $d_G(u_1,v_2) = d_G(u_2,v_1) = 2$, so that both $v_1$ and $v_2$ are adjacent to $x$. It follows that any vertex $v \in V_2$ that is not a universal vertex in $V_2$ is adjacent to $x$. Finally, suppose that $V_{1,1} \not = \emptyset $. Then if any $v \in V_2$ is not adjacent to $x$, the shortest paths from $x$ to $V_{1,1}$ would have length three and pass through $V_{1,2}$, which is not allowed by Condition 3 of Definition~\ref{dfn:Mycielski partition}. Hence if $V_{1,1} \not = \emptyset $, then $x$ is a universal vertex. 
		
		Conversely, if all of the above conditions are satisfied, then the partition is an $\mathcal{MGP}$-partition. 
	\end{proof}
	Notice that if $V_2 = \emptyset $ in the second family described in Corollary~\ref{n+alpha-1 characterisation}, then we obtain a star. 
	
	\begin{figure}
		\centering
		\begin{tikzpicture}[x=0.2mm,y=-0.2mm,inner sep=0.2mm,scale=1.0,thick,vertex/.style={circle,draw,minimum size=10,fill=lightgray}]
			\node at (-300,-100) [vertex] (v1) {};
			]\node at (-250,-100) [vertex] (v2) {};
			\node at (-200,-100) [vertex] (v3) {};
			\node at (-150,-100) [vertex] (v4) {};
			
			\node at (-300,-50) [vertex] (u1) {};
			\node at (-250,-50) [vertex] (u2) {};
			\node at (-200,-50) [vertex] (u3) {};
			\node at (-150,-50) [vertex] (u4) {};
			
			\node at (-225,0) [vertex] (x) {};

			\node at (-75,-75) [vertex] (v'1) {};
			\node at (-25,-75) [vertex] (v'2) {};
			\node at (25,-75) [vertex] (v'3) {};
			\node at (75,-75) [vertex] (v'4) {};
			
			\node at (-75,-25) [vertex] (u'1) {};
			\node at (-25,-25) [vertex] (u'2) {};
			\node at (25,-25) [vertex] (u'3) {};
			\node at (75,-25) [vertex] (u'4) {};
			
			\node at (0,25) [vertex] (x') {};

			\node at (-275,50) [vertex] (l1) {};
			\node at (-225,50) [vertex] (l2) {};
			\node at (-175,50) [vertex] (l3) {};

			\node at (200,-75) [vertex] (z1) {};
			\node at (128.67,-23.18) [vertex] (z2) {};
			\node at (155.92,60.68) [vertex] (z3) {};
			\node at (244.08,60.68) [vertex] (z4) {};
			\node at (271.33,-23.18) [vertex] (z5) {};
			
			\node at (200,-150) [vertex] (y) {};
			
			\node at (388,-110) [vertex] (w1) {};
			\node at (311.92,-54.72) [vertex] (w2) {};
			\node at (340.98,34.72) [vertex] (w3) {};
			\node at (435.02,34.72) [vertex] (w4) {};
			\node at (464.08,-54.72) [vertex] (w5) {};
			\node at (388,-30) [vertex] (w) {};
			
			\path
			
			(w) edge (w1)
			(w) edge (w2)
			(w) edge (w3)
			(w) edge (w4)
			(w) edge (w5)
			
			(x') edge (u'1)
			(x') edge (u'2)
			(x') edge (u'3)
			(x') edge (u'4)
			
			(x') edge[bend right = 10] (v'2)
			(x') edge[bend left = 10] (v'3)
			
			(v'1) edge (v'2)
			(v'1) edge[bend left] (v'3)
			(v'1) edge[bend left] (v'4)
			(v'4) edge[bend right] (v'2)
			(v'4) edge (v'3)
			
			(u'1) edge (v'1)
			(u'2) edge (v'2)
			(u'3) edge (v'3)
			(u'4) edge (v'4)

			(y) edge (z1)
			(z1) edge (z2)
			(z1) edge (z3)
			(z1) edge (z4)
			(z1) edge (z5)
			(z2) edge (z3)
			(z2) edge (z4)
			(z2) edge (z5)
			(z3) edge (z4)
			(z3) edge (z5)
			(z4) edge (z5)
			
			(v1) edge (u1)
			(v2) edge (u2)
			(v3) edge (u3)
			(v4) edge (u4)
			
			(x) edge (u1)
			(x) edge (u2)
			(x) edge (u3)
			(x) edge (u4)
			
			(x) edge[bend left =20] (v1)
			(x) edge (v2)
			(x) edge (v3)
			(x) edge[bend right =20] (v4)
			
			(x) edge (l1)
			(x) edge (l2)
			(x) edge (l3)

			(v1) edge (v2)
			(v3) edge (v4)

			;
		\end{tikzpicture}
		\caption{Some graphs with $\gp(\mathcal{M}(G))=n+\alpha-1$.}
		\label{fig:n+alpha-1 graphs}
	\end{figure}

	\section{Regular graphs}\label{sec:reg graphs}
	
	It was indicated in Section~\ref{General bound} that the regular graphs provide a rich source of meagre graphs. In this section we show that there is a strong upper bound on $\gp (\mathcal{M}(G))$ for regular graphs $G$ and that for any $d$ there are finitely many abundant $d$-regular graphs.
	
	\begin{thm}\label{thm:regular bound}
		If $G$ is a $d$-regular graph with order $n \geq 3$, then \[ \gp (\mathcal{M}(G)) \leq n+\left \lfloor \frac{d-1}{2}+\frac{1}{d} \right \rfloor. \]
	\end{thm}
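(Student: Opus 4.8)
We may assume $G$ is non-complete: if $G=K_{d+1}$ then $\gp(\mu(G))=d+2$ by Corollary~\ref{cor:completes}, and $d+2\le (d+1)+\frac{d-1}{2}+\frac1d$ for every $d\ge 1$. For non-complete $G$ the plan is to combine Theorem~\ref{thm:characterisation} with a degree count on a Mycielski partition. By Theorem~\ref{thm:characterisation} it suffices to prove $n_1-n_4\le\frac{d-1}{2}+\frac1d$ for every Mycielski partition $(V_1,V_2,V_3,V_4)$ of $G$. Let $A$ be the set of vertices of $V_1$ with a neighbour in $V_2$. By Lemma~\ref{cor:partition}, $V_1$ is independent, $(V_1,V_3)=\emptyset$ and $(V_1,V_2)$ is a matching, so $|(V_1,V_2)|=|A|$ and each vertex of $V_1$ sends at least $d-1$ edges to $V_4$. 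Summing degrees over $V_1$ gives $|(V_1,V_4)|=dn_1-|A|$, while $|(V_1,V_4)|\le dn_4$ since the vertices of $V_4$ have $dn_4$ edge-endpoints altogether; hence $d(n_1-n_4)\le|A|$, i.e. $n_1-n_4\le|A|/d$. It therefore remains to show $|A|\le\frac12(d^2-d+2)$, which is immediate when $A=\emptyset$.

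So assume $A\ne\emptyset$, fix $a\in A$, and let $v\in V_2$ be its matching partner. Since $a$ has no neighbour in $V_1\cup V_3$ and only $v$ in $V_2$, we have $N(a)=\{v\}\cup W$ with $W:=N(a)\cap V_4$ of size $d-1$; note $N(a)\cap(V_1\cup V_2)=\{v\}$. The crucial step uses Condition~2 of Definition~\ref{dfn:Mycielski partition}: it forces $d(a,w)=2$ for every $w\in(V_1\cup V_2)\setminus\{a,v\}$, so each such $w$ has a neighbour in $N(a)$. I would then count the edges incident with $N(a)$: the total degree of $N(a)$ is $d^2$, of which $d$ is used by the edges joining $N(a)$ to $a$; and since $(V_1\cup V_2)\setminus\{a,v\}$ is disjoint from $N(a)\cup\{a\}$, each of its $n_1+n_2-2$ vertices contributes a distinct edge into $N(a)$ that does not meet $a$. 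This yields $n_1+n_2-2\le d^2-d$.

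Finally, since $A\subseteq V_1$ and the $|A|$ edges of $(V_1,V_2)$ meet $V_2$ in $|A|$ distinct vertices, we get $|A|\le\min\{n_1,n_2\}$, hence $2|A|\le n_1+n_2\le d^2-d+2$ and $|A|\le\frac12(d^2-d+2)$. Therefore $n_1-n_4\le|A|/d\le\frac{d^2-d+2}{2d}=\frac{d-1}{2}+\frac1d$, as claimed. The point I expect to need most care is the edge-count in the second paragraph — verifying that the $n_1+n_2-2$ edges produced are pairwise distinct and that none of them is incident with $a$ — and this is exactly what the observations $W\subseteq V_4$ and $N(a)\cap(V_1\cup V_2)=\{v\}$ are there to secure; the remaining steps are routine bookkeeping.
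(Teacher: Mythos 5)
Your argument is correct and follows essentially the same route as the paper: bound $n_1-n_4$ by $r/d$ (your $|A|/d$) via the degree count on $(V_1,V_4)$, then bound the matching size $r$ by $\tfrac12(d^2-d+2)$ using Condition~2 of Definition~\ref{dfn:Mycielski partition} to place $(V_1\cup V_2)\setminus\{a,v\}$ inside $N^2(a)$. Your explicit treatment of the complete case and the slightly more careful edge-count around $N(a)$ are only presentational differences from the paper's proof.
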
 
	\begin{proof}
		Let $G$ be a $d$-regular graph, $S$ be a gp-set of $\mathcal{M}(G)$ that does not include $u^*$ and $(V_1,V_2,V_3,V_4)$ be the corresponding $\mathcal{MGP}$-partition. Suppose that $(V_1,V_2) = \emptyset $. Then all neighbours of vertices in $V_1$ lie in $V_4$. This implies that $n_4d \geq n_1d$, yielding $n_4 \geq n_1$, so that $\gp (\mathcal{M}(G)) = n$.
		
		Hence we may assume that $|(V_1,V_2)| = r > 0$. As $(V_1,V_2)$ is a matching, there are $n_1d-r$ edges in $(V_1,V_4)$, implying that $n_4 \geq n_1-\frac{r}{d}$. Thus
		\[ \gp (\mathcal{M}(G)) = n+n_1-n_4 \leq n+\frac{r}{d}.\]
		By Condition 2 of Definition~\ref{dfn:Mycielski partition}, if $u \in V_1$ is adjacent to a vertex $v \in V_2$, then all vertices of $(V_1\cup V_2)\setminus \{ u,v\} $ lie in $N^2(u)$. There are at least $2r$ vertices in $V_1\cup V_2$ and at most $d^2-d$ vertices in $N^2(u)$ and so $2r \leq d^2-d+2$. It follows that
		\[ \gp (\mathcal{M}(G)) \leq n+\frac{d-1}{2}+\frac{1}{d}. \]
		In particular, for $d \geq 3$ we have $\gp (\mathcal{M}(G)) \leq n+\frac{d-1}{2}$. 
	\end{proof}
	
	Theorem~\ref{thm:regular bound} suggests the following construction of abundant regular graphs.
	\begin{thm}\label{thm: n+1 construction}
		For all $d \geq 2$, there is an abundant $d$-regular graph $G(d)$ with order $n = 3d-1$ and $\gp (\mathcal{M}(G(d))) = n+1$. 
	\end{thm}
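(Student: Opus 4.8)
The plan is to construct, for each $d\ge 2$, an explicit $d$-regular graph $G(d)$ on $n=3d-1$ vertices together with an explicit Mycielski partition achieving $n_1-n_4=1$, and then to invoke Theorem~\ref{thm:characterisation} to conclude $\gp(\mu(G(d)))\ge n+1$; combined with the regular bound of Theorem~\ref{thm:regular bound} (which gives $\gp(\mu(G(d)))\le n+\frac{d-1}{2}+\frac{1}{d}$, an upper bound that exceeds $n+1$ for all $d\ge 2$, so no contradiction) this settles the claim once the construction works. The partition I would aim for has the shape forced by the sharpness analysis inside the proof of Theorem~\ref{thm:regular bound}: we want $|(V_1,V_2)|=r$ with $n_1-n_4$ as large as possible, and the computation there shows $n_1-n_4\le r/d$, so to get the value $1$ we want $r=d$ and the inequality $n_4\ge n_1-r/d$ to be tight, i.e. every edge leaving $V_1$ other than the matching edges goes to $V_4$ and $V_4$ is ``fully used up''. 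The cleanest way to realise $r=d$ is to take a single vertex $u\in V_1$ of degree $d$ with exactly one neighbour $v\in V_2$ and $d-1$ neighbours in $V_4$; but then $n_1-n_4\le 1/d<1$. So instead we need $n_1$ vertices in $V_1$ sharing their $V_4$-neighbourhood: the bound $2r-2\le d^2-d$ with $r=d$ is satisfied with room to spare, and we want $V_1\cup V_2$ to sit inside a common second neighbourhood.

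Concretely I would try the following. Let $V_1=\{a_1,\dots,a_d\}$ be an independent set (these will be the ``doubled'' vertices), let $V_2=\{b_1,\dots,b_d\}$ with matching edges $a_ib_i$, and let $V_4$ consist of $d-1$ vertices $c_1,\dots,c_{d-1}$; this gives $n=2d+(d-1)=3d-1$ as required, with $n_1=d$, $n_4=d-1$, hence $n+n_1-n_4=n+1$. Now each $a_i$ already has one edge (to $b_i$) and needs $d-1$ more, which I would route entirely into $V_4$: make every $a_i$ adjacent to all of $c_1,\dots,c_{d-1}$. This forces each $c_j$ to have $d$ edges to $V_1$ already, so $c_1,\dots,c_{d-1}$ get no further neighbours — good, they are $d$-regular. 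It remains to make each $b_i$ $d$-regular using only edges inside $\langle V_1\cup V_2\rangle$ consistent with Lemma~\ref{cor:partition} and the conditions of Definition~\ref{dfn:Mycielski partition}: $b_i$ has the edge $a_ib_i$ and must acquire $d-1$ more edges, which (since $V_1$ is independent and $(V_1,V_2)$ must stay a matching, so $b_i$ cannot be joined to any $a_j$) must all lie inside $V_2$. Thus $\langle V_2\rangle$ must be $(d-1)$-regular on $d$ vertices, i.e. the complete graph $K_d$; so take $\langle V_2\rangle=K_d$. One checks quickly that every vertex now has degree exactly $d$.

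The main obstacle is verifying that $(V_1,V_2,\emptyset,V_4)$ is genuinely a Mycielski partition, i.e. Conditions 1--3 of Definition~\ref{dfn:Mycielski partition}. Condition~1 is immediate since $V_1$ is independent and $V_3=\emptyset$. For Condition~2: a vertex $a_i\in V_1$ has its unique $V_2$-neighbour $b_i$, and we must check $d(a_i,w)=2$ for every $w\in(V_1\cup V_2)\setminus\{a_i,b_i\}$ — indeed $a_i\sim c_1\sim a_j$ gives $d(a_i,a_j)=2$, and $a_i\sim b_i\sim b_j$ (using $\langle V_2\rangle=K_d$) gives $d(a_i,b_j)=2$; similarly for $b_i\in V_2$ one must check the analogous distances, which hold by the same short paths. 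For Condition~3 the key point is that $V_1\cup V_2$ has diameter $2$ in $G$ (every pair is joined through $V_4$ or through $\langle V_2\rangle$), so any geodesic between two vertices of $V_1\cup V_2$ passing through a third such vertex has length $2$, and one checks it must have the permitted form iii), namely $b_j,b_k,b_\ell$ with the endpoints in $V_2$ — this is exactly where the choice $\langle V_2\rangle=K_d$ and the fact that $a_i$'s only short connections into $V_1$ run through $V_4\notin V_1\cup V_2\cup V_3$ pay off, since a length-$2$ geodesic $a_i,c_j,a_k$ has its midpoint $c_j\in V_4$ and so is harmless. I expect these checks to be short once the graph is fixed; the only genuine care needed is the small-$d$ cases ($d=2$, where $G(2)$ should be $C_5$, and $d=3$), which I would handle by direct inspection and note that they agree with the general description.
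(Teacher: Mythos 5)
Your construction is exactly the paper's: $V_1$ an independent set of size $d$ joined completely to a set $V_4$ of size $d-1$, $V_2$ a clique $K_d$ joined to $V_1$ by a perfect matching, with Mycielski partition $(V_1,V_2,\emptyset,V_4)$. Your verification of Conditions 1--3 is in fact more detailed than the paper's (which asserts the partition is Mycielski essentially without proof), and it is essentially sound; one small slip is in Condition 3, where the geodesics you actually need to excuse are not of the form $b_j,b_k,b_\ell$ (such a path is not a geodesic, since $\langle V_2\rangle=K_d$) but rather the paths $a_i,b_i,b_j$, whose reverses $b_j,b_i,a_i$ begin with two vertices of $V_2$ and are therefore permitted by variant iii). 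So the lower bound $\gp(\mu(G(d)))\ge n+1$ is fine.

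The genuine gap is the upper bound. The theorem asserts equality, and you explicitly observe that Theorem~\ref{thm:regular bound} only gives $\gp(\mu(G(d)))\le n+\frac{d-1}{2}+\frac{1}{d}$, which exceeds $n+1$ once $d\ge 5$; you then conclude ``this settles the claim once the construction works,'' but it settles only $\ge n+1$. The missing step is supplied by Corollary~\ref{bound}: since $G(d)$ is $d$-regular and $\alpha(G(d))=d$ (any independent set meets the clique $V_2$ in at most one vertex and cannot contain vertices of both $V_1$ and $V_4$, so it has size at most $d$), that corollary gives $\gp(\mu(G(d)))\le n+\max\{0,\alpha-\delta+1\}=n+1$, which closes the argument. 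Without this (or some equivalent upper bound specific to $G(d)$), the stated equality is unproved for $d\ge 5$.
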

	\begin{proof}
		We form the graph $G(d)$ as follows. Take three sets $V_1,V_2,V_4$ of vertices, where $|V_1| = |V_2| = d$ and $|V_4| = d-1$. Form the complete bipartite graph $K_{d,d-1}$ with partite sets $V_1$ and $V_4$. Make $V_2$ into a clique $K_d$ by adding an edge between every pair of vertices in $V_2$. Finally join $V_1$ and $V_2$ by a matching of size $d$. The graph $G(5)$ is shown in Figure~\ref{fig:reg n+1}. As suggested by our labelling of the sets, $(V_1,V_2,\emptyset ,V_4)$ is an $\mathcal{MGP}$-partition of $G(d)$. As $\ip _4(G(d)) = \alpha (G(d)) = d$, these graphs are abundant.

		\begin{figure}
			\centering
			\begin{tikzpicture}[x=0.2mm,y=-0.2mm,inner sep=0.2mm,scale=0.8,thick,vertex/.style={circle,draw,minimum size=10,fill=lightgray}]
				\node at (380,272) [vertex] (v1) {};
				\node at (316.5,467.4) [vertex] (v2) {};
				\node at (482.7,346.6) [vertex] (v3) {};
				\node at (277.3,346.6) [vertex] (v4) {};
				\node at (443.5,467.4) [vertex] (v5) {};
				
				\node at (80,272) [vertex] (u1) {};
				\node at (16.5,467.4) [vertex] (u2) {};
				\node at (182.7,346.6) [vertex] (u3) {};
				\node at (-32.7,346.6) [vertex] (u4) {};
				\node at (143.5,467.4) [vertex] (u5) {};

				\node at (-150,272) [vertex] (w1) {};
				\node at (-150,337.13) [vertex] (w2) {};
				\node at (-150,402.26) [vertex] (w3) {};
				\node at (-150,467.4) [vertex] (w4) {};
				\path
				
				(w1) edge (u1)
				(w1) edge (u2)
				(w1) edge (u3)
				(w1) edge (u4)
				(w1) edge[bend left] (u5)
				
				(w2) edge (u1)
				(w2) edge (u2)
				(w2) edge[bend left] (u3)
				(w2) edge (u4)
				(w2) edge (u5)
				
				(w3) edge (u1)
				(w3) edge (u2)
				(w3) edge (u3)
				(w3) edge (u4)
				(w3) edge (u5)
				
				(w4) edge[bend right] (u1)
				(w4) edge (u2)
				(w4) edge (u3)
				(w4) edge (u4)
				(w4) edge[bend right] (u5)

				(u1) edge[bend left] (v1)
				(u2) edge[bend left] (v2)
				(u3) edge[bend left] (v3)
				(u4) edge[bend left] (v4)
				(u5) edge[bend left] (v5)
				
				(v1) edge (v2)
				(v1) edge (v3)
				(v1) edge (v4)
				(v1) edge (v5)
				(v2) edge (v3)
				(v2) edge (v4)
				(v2) edge (v5)
				(v3) edge (v4)
				(v3) edge (v5)
				(v4) edge (v5)
				
				;
			\end{tikzpicture}
			\caption{A 5-regular graph $G$ with $\gp (\mathcal{M}(G)) = n+1$}
			\label{fig:reg n+1}
		\end{figure} 	
	\end{proof}
	We now classify the abundant $2$- and $3$-regular graphs. It turns out that the graphs $G(2)$ and $G(3)$ are the only abundant regular graphs with degree three or less.	
	
	\begin{thm}\label{thm: cycles}
		For $n \geq 3$, the Mycielskian of the cycle $C_n$ has general position number
		
		$$\gp(\mathcal{M}(C_n)) =
		{
			\begin{cases}
				n+1,&\text{ if } n = 3 \text{ or } 5 \\	
				n, & \text{ otherwise.}
		\end{cases}}$$
	\end{thm}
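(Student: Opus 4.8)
The plan is to dispose of $n=3$ and then reduce everything else to analyzing Mycielski partitions of $C_n$. Since $C_3\cong K_3$, Corollary~\ref{cor:completes} gives $\gp(\mu(C_3))=4$. For every $n\geq 4$ the cycle $C_n$ is non-complete and $2$-regular, so Corollary~\ref{thm:lower bound} gives $\gp(\mu(C_n))\geq n$ and Theorem~\ref{thm:regular bound} with $d=2$ gives $\gp(\mu(C_n))\leq n+\tfrac12+\tfrac12=n+1$; hence for $n\geq 4$ it remains only to decide whether $n+1$ is attained. For $n=5$ it is: with $C_5=v_0v_1v_2v_3v_4v_0$ one checks directly that $V_1=\{v_1,v_4\}$, $V_2=\{v_2,v_3\}$, $V_3=\emptyset$, $V_4=\{v_0\}$ is a Mycielski partition (equivalently, $C_5$ is the graph $G(2)$ of Theorem~\ref{thm: n+1 construction}), so $\gp(\mu(C_5))\geq 5+2-1=6$ and therefore $\gp(\mu(C_5))=6$.

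To finish I show that for $n=4$ and for $n\geq 6$ no Mycielski partition of $C_n$ has $n_1>n_4$, whence $\gp(\mu(C_n))=n$ by Theorem~\ref{thm:characterisation}. Assume $(V_1,V_2,V_3,V_4)$ is a Mycielski partition of $C_n$ with $n_1-n_4\geq 1$, so $V_1\neq\emptyset$. Arguing as in the proof of Theorem~\ref{thm:regular bound}: if $(V_1,V_2)=\emptyset$ then all edges leaving $V_1$ end in $V_4$, forcing $n_4\geq n_1$; hence $r:=|(V_1,V_2)|\geq 1$, and counting the $2n_1-r$ edges from $V_1$ to $V_4$ (using Lemma~\ref{cor:partition} for $(V_1,V_3)=\emptyset$) gives $n_4\geq n_1-\tfrac r2$, so $n+1\leq n+n_1-n_4\leq n+\tfrac r2$ and $r\geq 2$. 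Fix $u\in V_1$ with a neighbour $v\in V_2$. By Condition~2 of Definition~\ref{dfn:Mycielski partition} every vertex of $(V_1\cup V_2)\setminus\{u,v\}$ is at distance exactly $2$ from $u$ in $C_n$; since a vertex of $C_n$ has just one vertex at distance $2$ when $n=4$ and exactly two when $n\geq 5$, we get $n_1+n_2\leq 3$ if $n=4$ and $n_1+n_2\leq 4$ if $n\geq 5$. But $(V_1,V_2)$ is a matching of size $r\geq 2$ by Lemma~\ref{cor:partition}, so $n_1\geq 2$ and $n_2\geq 2$, i.e.\ $n_1+n_2\geq 4$. This already contradicts the case $n=4$, and for $n\geq 5$ it forces $n_1=n_2=2$, $r=2$, and $n_4=n_1-1=1$.

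It remains to rule this configuration out for $n\geq 6$. Write $V_1=\{u_1,u_2\}$, $V_2=\{v_1,v_2\}$ with matching edges $u_1v_1,u_2v_2$, and $V_4=\{x\}$. The second neighbour of $u_1$ in $C_n$ is not in $V_1$ ($V_1$ is independent), not in $V_2$ (the matching already uses $u_1v_1$), and not in $V_3$ (as $(V_1,V_3)=\emptyset$), so it lies in $V_4$; hence $u_1\sim x$, and likewise $u_2\sim x$. Since $x$ has degree $2$ in $C_n$, its neighbours are exactly $u_1,u_2$, so $C_n$ contains the path $v_1,u_1,x,u_2,v_2$ on five distinct vertices, and its remaining $n-5$ vertices — all of which lie in $V_3$ — form the complementary $v_1$–$v_2$ arc. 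For $n\geq 6$ that arc has an interior vertex, so $v_1\not\sim v_2$, and the two $u_1$–$v_2$ arcs of $C_n$ have lengths $3$ and $n-3$. Now Condition~2 applied to $u_1\in V_1$ with neighbour $v_1\in V_2$ demands $d(u_1,v_2)=2$, while in fact $d(u_1,v_2)=\min\{3,n-3\}=3$ when $n\geq 6$ — a contradiction. (This is exactly where $n=5$ differs: there $v_1\sim v_2$, so $d(u_1,v_2)=2$.) Hence $\gp(\mu(C_n))=n$ for all $n\geq 6$, completing the proof.

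I expect the last paragraph to be the main obstacle: one must argue that once $n_1=n_2=2$ and $n_4=1$ the cyclic structure is completely rigid — the path $v_1,u_1,x,u_2,v_2$ together with an arc of $V_3$-vertices — and then identify $d(u_1,v_2)$ as the quantity that violates Condition~2 precisely when $n\geq 6$. The edge-counting, the reduction to $n_1=n_2=2$, and the appeals to Lemma~\ref{cor:partition} and Conditions~1–2 are routine once that picture is in place.
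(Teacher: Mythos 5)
Your proof is correct and follows essentially the same route as the paper: dispose of $n=3$ via Corollary~\ref{cor:completes}, get the upper bound $n+1$ from Theorem~\ref{thm:regular bound}, force the extremal configuration $r=2$, $n_1=n_2=2$, $n_4=n_1-1$ by the same edge-counting, and exhibit $C_5\cong G(2)$ for the case $n=5$. The only (cosmetic) difference is in the final contradiction for $n\geq 6$: the paper observes that $u_2\sim v_2$ with both in $N^2(u_1)$ already forces $n\leq 5$, whereas you reconstruct the whole cycle around the path $v_1,u_1,x,u_2,v_2$ and contradict Condition~2 via $d(u_1,v_2)=3$ --- the same underlying fact.
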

	\begin{proof}
		The result for $n = 3$ follows from Corollary~\ref{cor:completes}. For $n \geq 4$ let $S$ be a gp-set of $\mathcal{M}(C_n)$ that does not include $u^*$ and consider the corresponding $\mathcal{MGP}$-partition $(V_1,V_2,V_3,V_4)$ of $V(C_n)$. Suppose that $\gp(\mathcal{M}(C_n)) \geq n+1$. By Theorem~\ref{thm:regular bound} we have $\gp(\mathcal{M}(C_n)) \leq n+1$, so we must have exactly $\gp (\mathcal{M}(C_n))=n+1$. As we have equality in the bound of Theorem~\ref{thm:regular bound}, it follows that the matching $(V_1,V_2)$ has size $r = 2$ and $n_4 = n_1-1$. 
		
		Let $u_1,u_2 \in V_1$ and $v_1,v_2 \in V_2$, where $u_1 \sim v_1$, $u_2 \sim v_2$ are the edges of $(V_1,V_2)$ . It follows from Theorem~\ref{thm:regular bound} that $\{ u_2,v_2\} = N^2(u_1)$. Since $u_2 \sim v_2$, the cycle has length five and all cycles with length $n = 4$ or $n \geq 6$ are meagre. However, as can be seen in Figure~\ref{fig:Grotzsch graph}, this argument does yield a gp-set of cardinality six in the Gr\"otzsch graph $\mathcal{M}(C_5)$. Notice that $C_5$ is isomorphic to $G(2)$ from Theorem~\ref{thm: n+1 construction}.	
	\end{proof}
	
	\begin{figure}
		\centering
		\begin{tikzpicture}[x=0.2mm,y=-0.2mm,inner sep=0.2mm,scale=0.8,thick,vertex/.style={circle,draw,minimum size=10,fill=lightgray}]
			\node at (380,200) [vertex,color=red] (v1) {};
			\node at (208.8,324.4) [vertex] (v2) {};
			\node at (274.2,525.6) [vertex] (v3) {};
			\node at (485.8,525.6) [vertex,color=red] (v4) {};
			\node at (551.2,324.4) [vertex] (v5) {};
			\node at (380,272) [vertex,color=red] (v6) {};
			\node at (316.5,467.4) [vertex,color=red] (v7) {};
			\node at (482.7,346.6) [vertex] (v8) {};
			\node at (277.3,346.6) [vertex,color=red] (v9) {};
			\node at (443.5,467.4) [vertex,color=red] (v10) {};
			\node at (380,380.01) [vertex] (x) {};
			\path
			(v1) edge (v2)
			(v1) edge (v5)
			(v2) edge (v3)
			(v3) edge (v4)
			(v4) edge (v5)
			
			(v6) edge (x)
			(v9) edge (x)
			(v7) edge (x)
			(v10) edge (x)
			(v8) edge (x)
			
			(v2) edge (v6)
			(v2) edge (v7)
			(v1) edge (v9)
			(v1) edge (v8)
			(v5) edge (v6)
			(v5) edge (v10)
			(v4) edge (v8)
			(v4) edge (v7)
			(v3) edge (v9)
			(v3) edge (v10)
			;
		\end{tikzpicture}
		\caption{A gp-set (in red) for $\mathcal{M}(C_5)$}
		\label{fig:Grotzsch graph}
	\end{figure}

	\begin{thm}\label{thm:cubic graphs}
		The graph $G(3)$ is the unique non-complete abundant cubic graph.  
	\end{thm}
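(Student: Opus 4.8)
The plan is to leverage Theorem~\ref{thm:regular bound} together with the equality cases hidden in its proof. Since $G$ is cubic, Theorem~\ref{thm:regular bound} gives $\gp(\mu(G)) \le n + \frac{d-1}{2} + \frac1d = n + 1 + \frac13$, hence $\gp(\mu(G)) \le n+1$; and as $G$ is regular, $2\alpha_{gp}(G) \le 2\alpha(G) \le n$, so $G$ being abundant forces $\gp(\mu(G)) = n+1$ exactly. Theorem~\ref{thm: n+1 construction} already records that $G(3)$ is abundant, so what remains is the converse: every cubic $G$ with $\gp(\mu(G)) = n+1$ is isomorphic to $G(3)$.

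Fix a Mycielski partition $(V_1,V_2,V_3,V_4)$ of $G$ with $n_1 - n_4 = 1$, and set $r = |(V_1,V_2)|$. Tracing the proof of Theorem~\ref{thm:regular bound}: $r > 0$ (else $\gp(\mu(G)) = n$), the estimate $3n_4 \ge 3n_1 - r$ gives $\gp(\mu(G)) \le n + r/3$ and hence $r \ge 3$, and $2r - 2 \le d^2-d = 6$ gives $r \le 4$; moreover Condition~2 of Definition~\ref{dfn:Mycielski partition} applied to an edge of $(V_1,V_2)$ together with $|N^2(u)| \le 6$ yields $n_1 + n_2 \le 8$. Since $n_1,n_2 \ge r$, the case $r = 4$ forces $n_1 = n_2 = 4$ with the matching $(V_1,V_2)$ saturating $V_1$ and $V_2$ and with $n_4 = 3$, whereas for $r = 3$ all of the above inequalities are equalities.

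I would dispose of the case $r=4$ by a handshake count at $V_4$, and this is the step I expect to be the main obstacle. Each matched vertex sends exactly two edges into $V_4$ (its third edge goes to its partner, and by Lemma~\ref{cor:partition} none goes to $V_1\cup V_3$), so $|(V_1,V_4)| = 8$. For a matched vertex $u_i$, the inclusion in Condition~2 together with $|N^2(u_i)| \le 6$ forces the identity $N^2(u_i) = (V_1\cup V_2)\setminus\{u_i,v_i\}$; this leaves no room for overlaps in the union $N(v_i)\cup N(w)\cup N(w')$, where $w,w'$ are the neighbours of $u_i$ in $V_4$, and one deduces that both $N(w)$ and $N(w')$ lie in $V_1\cup V_2$ and that $v_i$ has no neighbour in $V_4$; as every vertex of $V_2$ is matched, $(V_2,V_4)=\emptyset$. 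A short count shows each vertex of $V_4$ has at least two neighbours in $V_1$, hence is a $V_4$-neighbour of a matched vertex, so $N(w)\subseteq V_1\cup V_2$, and thus $N(w)\subseteq V_1$, for every $w\in V_4$. Then $V_4$ is independent and $3|V_4| = \sum_{w\in V_4}d(w) = |(V_1,V_4)| = 8$, contradicting $3|V_4| = 9$.

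For $r=3$, equality gives $n_4 = n_1-1$ and $3n_4 = 3n_1-3 = |(V_1,V_4)|$, so every edge incident with $V_4$ runs to $V_1$; hence $V_4$ is independent with $N(V_4)\subseteq V_1$. I would next show the matching saturates $V_1$: an unmatched $u\in V_1$ has all three neighbours in $V_4$, so for a matched pair $u_1\sim v_1$, Condition~2 gives $d(u_1,u)=2$ through a common neighbour $w\in V_4$, and since $v_1$ has no neighbour in $V_4$ one gets $d(u,v_1)=3$, realised by the geodesic $u,w,u_1,v_1$ — a length-$3$ geodesic between vertices of $V_1\cup V_2$ passing through $u_1\in V_1$, which is of none of the forms permitted by Condition~3, a contradiction. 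Thus $n_1 = 3$, $n_4 = 2$, both vertices of $V_4$ are adjacent to all of $V_1$, and $N(u_i)=\{v_i,w_1,w_2\}$ for each $u_i\in V_1$; computing $N^2(u_1) = \{u_2,u_3\}\cup(N(v_1)\setminus\{u_1\})$ and applying Condition~2 forces $V_2\setminus\{v_1\}\subseteq N(v_1)$, whence $n_2 = 3$ and $\langle V_2\rangle\cong K_3$. Finally any vertex of $V_3$ would have all its neighbours in $V_3$ and so lie in a different component from $u_1$, contradicting the clause of Condition~2 placing $V_3$ within distance~$3$ of $u_1$; hence $V_3 = \emptyset$, and the surviving adjacencies are precisely those of $G(3)$.
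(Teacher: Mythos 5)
Your proof is correct, and while it starts from the same frame as the paper's --- Theorem~\ref{thm:regular bound} forces $\gp(\mu(G))=n+1$ and $r=|(V_1,V_2)|\in\{3,4\}$ --- the way you close out the two cases is genuinely different. For $r=4$ the paper shows $V_3=\emptyset$ and then invokes parity ($n=n_1+n_2+n_4=11$ is odd, impossible for a cubic graph); your handshake count at $V_4$ ($|(V_1,V_4)|=8$ versus $\sum_{w\in V_4}d(w)=9$ once you have forced $N(w)\subseteq V_1$ for every $w\in V_4$) reaches the same contradiction without ever discussing $V_3$. For $r=3$ the paper runs through the subcases $n_1=5$, $n_1=4$ (with $n_2\in\{3,4\}$ and $n_3\in\{0,2\}$) and $n_1=3$ separately, each with an explicit forbidden configuration or another parity argument; your observation that equality in $3n_4\ge 3n_1-r$ forces every edge at $V_4$ to run to $V_1$, combined with the forbidden length-three geodesic $u,w,u_1,v_1$ produced by any unmatched vertex $u\in V_1$, kills all of $n_1\in\{4,5\}$ at once and lands directly on $n_1=3$, $n_4=2$, after which the reconstruction of $G(3)$ matches the paper's. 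Your route is shorter and more structural, trading the paper's concrete configuration-by-configuration analysis for two counting identities; the only cosmetic blemish is the phrase ``all of the above inequalities are equalities'' in the $r=3$ case, since $2r-2\le 6$ and $n_1+n_2\le 8$ need not be tight there --- the one equality you actually use, $|(V_1,V_4)|=3n_1-3=3n_4$, is correctly established and is all that is needed.
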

	\begin{proof}
		Let $G$ be a non-complete cubic graph with order $n$ and $\gp(\mathcal{M}(G)) \geq n+1$. The proof of Theorem~\ref{thm:regular bound} shows that $\gp (\mathcal{M}(G)) = n+1$ and the matching $(V_1,V_2)$ has size $r = 3$ or $4$. 
		
		Suppose firstly that $r = 4$ and let $u_i \sim v_i$ for $i = 1,2,3,4$ be the edges of $(V_1,V_2)$. By Condition 2 of Definition~\ref{dfn:Mycielski partition} we have $\{ u_2,u_3,u_4,v_2,v_3,v_4\} \subseteq N^2(u_1)$. As there are at most six vertices in $N^2(u_1)$, it follows that $\{ u_2,u_3,u_4,v_2,v_3,v_4\} = N^2(u_1)$, $n_1 = n_2 = 4$ and $n_4 = 3$. Again by Condition 2 of Definition~\ref{dfn:Mycielski partition} any vertex $w$ of $V_3$ lies in $N^3(u)$. However, a shortest $u_1,w$-path would pass through $V_1 \cup V_2$, which is forbidden by Condition 3 of Definition~\ref{dfn:Mycielski partition}. Thus $V_3 = \emptyset $. However, this implies that $G$ has order $n = n_1+n_2+n_4 = 11$, whereas any cubic graph has even order, a contradiction.    
		
		Hence we can assume that $r = 3$. Write $V_1 = \{ u_1,u_2,\dots, u_{n_1}\} $ and $V_2 = \{ v_1,v_2,\dots ,v_{n_2}\} $, where $u_i \sim v_i$ are the edges of the matching $(V_1,V_2)$ for $i = 1,2,3$. As above, $(V_1 \cup V_2) \setminus \{ u_1,v_1\} \subseteq N^2(u_1)$, so have $n_1+n_2 \leq 8$. Since $n_1,n_2 \geq r = 3$, $n_1$ lies in the range $3 \leq n_1 \leq 5$. If $n_1+n_2 = 8$, then $(V_1 \cup V_2) \setminus \{ u_1,v_1\} = N^2(u_1)$ and as before $V_3$ would be empty.
		
		Suppose that $n_1 = 5$. Then as $3 \leq n_2 \leq 8-n_1 = 3$, we have $n_2 = 3$, $n_3 = 0$ and $n_4 = 4$, giving $n = 12$. For $i = 1,2,3$, we have $V_1 \cup V_2 = \{ u_i,v_i\} \cup N^2(u_i)$. As each vertex of $V_2$ has just one edge to $V_1$, both vertices of $N_G(v_i) \cap N^2(u_i)$ must lie in $V_2$. Hence $V_2$ induces a triangle in $G$. Thus there is no path of length at most three from $v_1$ to $u_4$ via $\{ v_2,v_3\} $, so the path $v_1,u_1,w,u_4$, where $w$ is a neighbour of $u_1$ in $V_4$, is a shortest path of the form forbidden by Condition 3 of Definition~\ref{dfn:Mycielski partition}, a contradiction.
		
		Now suppose that $n_1 = 4$, so that $n_4 = 3$ and $n_2 \in \{ 3,4\} $. If $n_2 = 4$, then $n_3 = 0$, giving $n = 11$, whereas $G$ must have even order. Thus $n_2 = 3$. By Condition 2 of Definition~\ref{dfn:Mycielski partition} all vertices of $n_3$ lie at distance at most three from $u_i$ for $i = 1,2,3$. There is just one vertex $x_i$ of $N^2(u_i)$ that does not belong to $V_1 \cup V_2$ for $i = 1,2,3$ and $x_i$ is the only vertex of $N^2(u_i)$ that can have neighbours in $V_3$ by Condition 3 of Definition~\ref{dfn:Mycielski partition}. Thus $n_3 \leq 3$ and, by the parity of $n$, either $n_3 = 0$ or $n_3 = 2$. Write $N_G(x_i) \cap N^3(u_i) = \{ y_1,y_2\} $. If $x_i \in V_3$, then by Condition 3 of Definition~\ref{dfn:Mycielski partition} neither $y_1$ nor $y_2$ is in $V_3$, contradicting the parity requirement. Thus $x_i \in V_4$ and for each of $u_1,u_2,u_3$ two vertices of $V_4$ lie in $N_G(u_i)$ and one vertex of $V_4$ lies in $N^2(u_i)$. Write $V_4 = \{ x_1,w_1,w_2\} $, where $N_G(u_1) = \{ w_1,w_2,v_1\} $.  
		
		If $n_3 = 2$, then, by Condition 3 of Definition~\ref{dfn:Mycielski partition}, $\{ x_1\}  \cup (V_1 \setminus \{ u_1\}) \cap N_G(v_1) = \emptyset $. Assume without loss of generality that $x_1 \sim w_2$. Then at least one vertex of $\{ u_2,u_3\} $, say $u_2$, lies in $N_G(w_1)$,and we obtain the configuration in Figure~\ref{fig:Moore tree cubic} (although note that the positions of $u_3$ and $u_4$ might be swapped). But now $u_2$ has at most one neighbour in $V_4$, contradicting our previous deduction. Therefore $n_3 = 0$.  
		
		\begin{figure}\centering
			\begin{tikzpicture}[x=0.2mm,y=-0.2mm,inner sep=0.1mm,scale=4,
				thick,vertex/.style={circle,draw,minimum size=16,font=\tiny,fill=white},edge label/.style={fill=white}]
				
				\node at (0,0) [vertex] (v1) {$u_1$};
				
				\node at (-50,20) [vertex] (v2) {$w_1$};
				\node at (0,20) [vertex] (v3) {$w_2$};
				\node at (50,20) [vertex] (v4) {$v_1$};
				
				\node at (-60,40) [vertex] (v5) {$u_2$};
				\node at (-40,40) [vertex] (v6) {$u_4$};
				\node at (-10,40) [vertex] (v7) {$w_3$};
				\node at (10,40) [vertex] (v8) {$u_3$};
				\node at (40,40) [vertex] (v9) {};
				\node at (60,40) [vertex] (v10) {};
				
				\node at (-20,60) [vertex] (v11) {$x_1$};
				\node at (0,60) [vertex] (v12) {$x_2$};
				\path
				(v1) edge (v2)
				(v1) edge (v3)
				(v1) edge (v4)
				(v2) edge (v5)
				(v2) edge (v6)
				(v3) edge (v7)
				(v3) edge (v8)
				(v4) edge (v9)
				(v4) edge (v10)
				(v7) edge (v11)
				(v7) edge (v12)
				;	
				
			\end{tikzpicture}
			\caption{Configuration for $n_1 = 4, n_2 = 3, n_3 = 2, n_4 = 3$.}
			\label{fig:Moore tree cubic}
		\end{figure}
		
		Thus suppose that $n_1 = 4$, $n_2 = n_4 = 3$ and $n_3 = 0$. Write $V_4 = \{ w_1,w_2,w_3\} $. Then each of $u_1,u_2,u_3$ has one edge to $V_2$ and all other edges incident with $V_1$ are to $V_4$. In particular, $N_G(u_4) \subset V_4$. Thus $|(V_1,V_4)| = 9$ and every vertex of $V_4$ has neighbourhood entirely contained in $V_1$. Hence each vertex of $V_2$ must have one neighbour in $V_1$ and two neighbours in $V_2$, so $V_2$ induces a triangle. This implies that $G$ is isomorphic to the graph in Figure~\ref{fig:cubic graph 2}. However, this graph contains a shortest path $v_1,u_1,w_3,u_4$, contradicting Condition 3 in Definition~\ref{dfn:Mycielski partition}.
		
		\begin{figure}\centering
			\begin{tikzpicture}[x=0.2mm,y=-0.2mm,inner sep=0.1mm,scale=0.8,
				thick,vertex/.style={circle,draw,minimum size=16,font=\tiny,fill=white},edge label/.style={fill=white}]

				\node at (0,0) [vertex] (u4) {$u_4$};
				\node at (100,0) [vertex] (u1) {$u_1$};
				\node at (50,86.60) [vertex] (w1) {$w_1$};
				\node at (-50,86.60) [vertex] (u2) {$u_2$};
				\node at (-100,0) [vertex] (w2) {$w_2$};
				\node at (-50,-86.60) [vertex] (u3) {$u_3$};
				\node at (50,-86.60) [vertex] (w3) {$w_3$};
				
				\node at (200,0) [vertex] (v1) {$v_1$};	
				\node at (-100,173.21) [vertex] (v2) {$v_2$};	
				\node at (-100,-173.21) [vertex] (v3) {$v_3$};	
				\path
				(v1) edge[bend left] (v2)
				(v2) edge[bend left] (v3)
				(v3) edge[bend left] (v1)
				
				(v1) edge (u1)
				(v2) edge (u2)
				(v3) edge (u3)
				
				(u4) edge (w1)
				(u4) edge (w2)
				(u4) edge (w3)
				(u1) edge (w1)
				(w1) edge (u2)
				(u2) edge (w2)
				(w2) edge (u3)
				(u3) edge (w3)
				(w3) edge (u1)
				;	
				
			\end{tikzpicture}
			\caption{Cubic graph for $n_1 = 4, n_2 = 3, n_3 = 0, n_4 = 3$.}
			\label{fig:cubic graph 2}
		\end{figure}

		Finally, let $n_1 = 3, n_4 = 2$. Each of $u_1,u_2,u_3$ has one edge to $V_2$ and two edges to $V_4$, making six edges from $V_1$ to $V_4$. Hence both vertices of $V_4$ have neighbourhood contained in $V_1$. For each of $u_1,u_2,u_3$ to reach each vertex of $V_2$ in distance at most two, we see that $n_2 = 3$ and $V_2$ induces a triangle in $G$, so that $V_3 = \emptyset $. Hence $G$ is isomorphic to $G(3)$, completing the proof. \end{proof}

	\begin{thm}\label{thm:largeregulargraphsaremeagre}
		Any $d$-regular graph with order $\geq d^3-2d^2+2d+2$ is meagre. 
	\end{thm}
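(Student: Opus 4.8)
The plan is to prove the contrapositive: if a $d$-regular graph $G$ is abundant, then $n \le d^{3}-2d^{2}+2d+1$. Note first that such a $G$ is non-complete, since its order exceeds $d+1$, so Lemma~\ref{lem:completes} guarantees a gp-set of $\mu(G)$ that omits the root $u^{*}$. Unwinding the hypothesis, ``abundant'' means $\gp(\mu(G)) > \max\{n,2\alpha_{gp}(G)\} \ge n$, hence $\gp(\mu(G)) \ge n+1$; passing to a root-avoiding gp-set yields a Mycielski partition $(V_{1},V_{2},V_{3},V_{4})$ with $n_{1}-n_{4}\ge 1$. In particular $V_{1}\neq\emptyset$, and by the opening paragraph of the proof of Theorem~\ref{thm:regular bound} the set $(V_{1},V_{2})$ is non-empty (otherwise $n_{4}\ge n_{1}$). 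Writing $r=|(V_{1},V_{2})|$, that same proof gives $1\le n_{1}-n_{4}\le r/d$, so $r\ge d$, and the $N^{2}$-counting there gives $2r-2\le d^{2}-d$, i.e.\ $r\le\tfrac{d^{2}-d+2}{2}$.

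Next I would fix an edge $uv\in(V_{1},V_{2})$ with $u\in V_{1}$, $v\in V_{2}$, and localise the partition around $u$. By Condition~2 of Definition~\ref{dfn:Mycielski partition}, $(V_{1}\cup V_{2})\setminus\{u,v\}\subseteq N^{2}(u)$, so $n_{1}+n_{2}\le d^{2}-d+2$; since the matching $(V_{1},V_{2})$ saturates $r$ vertices of $V_{2}$ we have $n_{2}\ge r$, hence $n_{1}\le d^{2}-d+2-r$, and abundance gives $n_{4}\le n_{1}-1\le d^{2}-d+1-r$. The remaining task is to bound $n_{3}$, and here is the crux. Using Condition~2 (which forces $d(u,w)\le 3$ for every $w\in V_{3}$, while Lemma~\ref{cor:partition} rules out $d(u,w)=1$) together with Condition~3, I would show that every shortest $u$--$w$ path with $w\in V_{3}$ has all of its internal vertices in $V_{4}$: any such path has length $2$ or $3$ and, if an internal vertex lay in $V_{1}\cup V_{2}\cup V_{3}$, the path (or its reverse) would have to match one of the shapes (i)--(iii), each of which requires its initial vertex to be in $V_{2}$ -- impossible here since one endpoint is $u\in V_{1}$ and the other is in $V_{3}$. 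Consequently every vertex of $V_{3}$ has a neighbour in $V_{4}$, so $n_{3}\le|(V_{3},V_{4})|$.

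Now I would bring in regularity via an edge count at $V_{4}$. Each vertex of $V_{1}$ has $d$ incident edges, at most one to $V_{2}$ (the matching), none to $V_{1}$ (independence) and none to $V_{3}$ (Lemma~\ref{cor:partition}), so $|(V_{1},V_{4})|=n_{1}d-r$. Since $n_{4}d=\sum_{z\in V_{4}}\deg(z)\ge|(V_{1},V_{4})|+|(V_{3},V_{4})|$, using $n_{4}\le n_{1}-1$ we get
\[
n_{3}\le|(V_{3},V_{4})|\le n_{4}d-(n_{1}d-r)\le(n_{1}-1)d-n_{1}d+r=r-d.
\]
Combining the three estimates, $n=(n_{1}+n_{2})+n_{3}+n_{4}\le(d^{2}-d+2)+(r-d)+(d^{2}-d+1-r)=2d^{2}-3d+3$, and since $2d^{2}-3d+3\le d^{3}-2d^{2}+2d+1$ for every $d\ge 2$ (equivalently $(d-1)^{2}(d-2)\ge 0$), the claimed inequality follows -- in fact this gives the sharper conclusion that every abundant $d$-regular graph has order at most $2d^{2}-3d+3$.

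The step I expect to be most delicate is the ``insulation'' argument in the second paragraph: one must enumerate the length-$2$ and length-$3$ $u$--$V_{3}$ geodesics in $G$ and check, subcase by subcase against the three admissible shapes of Condition~3, that no internal vertex can lie in $V_{1}\cup V_{2}\cup V_{3}$; the rest is bookkeeping. One should also verify the boundary behaviour -- that $G$ is indeed non-complete and admits a root-avoiding gp-set throughout the stated range of $n$, and that the degenerate cases $d\in\{0,1\}$ are vacuous or immediate -- but these are routine.
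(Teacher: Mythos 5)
Your proposal is correct, and its key step differs from the paper's in an interesting way. Both arguments open identically: abundance forces $n_1>n_4$, hence (via the proof of Theorem~\ref{thm:regular bound}) $r=|(V_1,V_2)|\ge d$ and $n_1+n_2\le d^2-d+2$ because $(V_1\cup V_2)\setminus\{u,v\}\subseteq N^2(u)$ for an edge $uv$ of the matching. Where the paper bounds $|V_3|$ by a ball-volume count --- $V_3\subseteq N^2(u)\cup N^3(u)$, with the $\ge 2r-2$ vertices of $V_1\cup V_2$ in $N^2(u)$ barred from having $V_3$-descendants, giving $|V_1\cup V_2\cup V_3|\le d^3-3d^2+4d$ --- you instead observe that every $u$--$V_3$ geodesic has all internal vertices in $V_4$ (your ``insulation'' step is sound: Conditions 2 and 3 do force $2\le d(u,w)\le 3$ and none of the shapes (i)--(iii) can apply since neither endpoint lies in $V_2$), so each vertex of $V_3$ sends an edge into $V_4$, and a degree count at $V_4$ yields $n_3\le |(V_3,V_4)|\le n_4d-(n_1d-r)\le r-d$. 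This replaces a cubic estimate by a quadratic one and gives the strictly sharper conclusion that every abundant $d$-regular graph has order at most $2d^2-3d+3$ (which the paper's statement then follows from, as $(d-1)^2(d-2)\ge 0$, and which is tight at $d=2$ where $C_5=G(2)$ has order $5$). Two cosmetic points: the claim that $G$ is non-complete should be justified from the order hypothesis of the theorem rather than from abundance alone (complete graphs satisfy the conclusion trivially since $d+1\le d^3-2d^2+2d+1$), and, as you note, $d\le 1$ needs a separate sentence; neither affects the argument.
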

	\begin{proof}
		Let $G$ be an abundant $d$-regular graph. We can assume that $r = |(V_1,V_2)| \geq d > 0$. Let $u_1 \sim v_1$ be an edge of the matching $(V_1,V_2)$, where $u_1 \in V_1$, $v_1 \in V_2$. Consider a breadth-first search tree $T$ of depth three rooted at $u_1$. By Condition 2 in Definition~\ref{dfn:Mycielski partition}, all vertices of $(V_1 \cup V_2)\setminus \{ u_1,v_1\} $ are contained in $N^2(u)$ and $V_3$ is contained in $N^2(u) \cup N^3(u)$. There are at most $d^2-d$ vertices at distance two from $u_1$ and at most $d(d-1)^2$ vertices in $N^3(u_1)$. 
		
		Moreover there are at least $2d-2$ vertices of $V_1 \cup V_2$ in $N^2(u)$ and no such vertex can have descendants at distance three from $u_1$ in $V_3$ by Condition 3 in Definition~\ref{dfn:Mycielski partition}. Therefore we see that 
		\[ |V_1\cup V_2 \cup V_3| \leq 2+d^2-d+[d^2-d-(2d-2)](d-1) = d^3-3d^2+4d.\]
		Since we are assuming that $G$ is abundant, we can bound $n_4$ as follows:
		\[ n_4 \leq n_1-1 \leq 1+(d^2-d)-(d-1)-1 = d^2-2d+1. \]
		In total then the order of $G$ is bounded above by $d^3-2d^2+2d+1$.
	\end{proof}
	This shows that for any value of $d$ there are only a finite number of abundant $d$-regular graphs.	This raises the question whether we can strengthen this result by making the same conclusion for graphs with given maximum degree. The answer is no, as evidenced by the following construction. Let the maximum degree be $\Delta \geq 4$. Let $P$ be a path $u_0,u_1,\dots ,u_r$, where $r \geq 3$, and take the Cartesian product $P \cp K_2$ to form a ladder graph. If we identify the vertices of $K_2$ with $\{ 0,1\} $, then we can write the vertices of the ladder as $(u_i,j)$ for $0 \leq i \leq r$ and $j = 0,1$. Finally add $\Delta -3$ leaves to each vertex $(u_i,0)$ for $1 \leq i \leq r$ and $\Delta -4 $ leaves to $(u_0,0)$ and $(u_r,0)$. By taking the set of leaves to be $V_1$, $V_2 = V(P) \times \{ 1\} $ and $V_4 = V(P) \times \{ 0\} $ we obtain an $\mathcal{MGP}$-partition that shows that the graph is abundant. By increasing $r$ we see that the number of vertices in this family is unbounded for fixed $\Delta $.
	
	For degrees two and three, we have shown that our construction $G(d)$ is the unique abundant graph. Also, for both of these graphs the general position number of the Mycielskian is just one more than the order of the graph. This suggests the following problem.

	\begin{prb}
		For given $d \geq 4$, what is the largest value of $\gp (\mathcal{M}(G))-n $ over all $d$-regular graphs? Is $G(d)$ the unique abundant graph? 
	\end{prb}
	
	\section{The Mycielskian of trees and graphs with large girth}\label{Trees}
	
	We now investigate the general position number of the Mycielskian of trees and, as a by-product, give an upper bound on $\gp (\mathcal{M}(G))$ for graphs $G$ without short cycles. Let $T$ be any tree with order $n \geq 3$ and $L$ be the set of all $\ell(T)$ leaves of $T$. Recall that $T$ is \emph{abundant} if $\gp (\mathcal{M}(T)) > \max\{ n,2\ip _4(T)\} $ and otherwise is \emph{meagre}. We present an exact expression for $\gp (\mathcal{M}(T))$ for a wide class of trees.
	
	A vertex of $T$ is a \emph{support vertex} if it is adjacent to a leaf of $T$. We denote the set of support vertices of $T$ by $\Sigma = \{ y_1,y_2,\dots ,y_s\} $ and the number of support vertices of $T$ by $s = |\Sigma |$, whilst $Z = \{ z_1,z_2,\dots ,z_t\} $ is the set of vertices in $V(T)\setminus (L \cup \Sigma )$, so that $n = \ell +s+t$. We associate with each support vertex $y_i$ the set $L_i$ of leaves to which it is adjacent in $T$. Let $D_T = \min\{ d_T(l,l^\prime ): l \in L_i, l^\prime  \in L_j, i \not = j\} $, where $d_T$ represents the distance in $T$. If $s = 1$, then $T$ is a star and by Corollary~\ref{n+alpha-1 characterisation} we have $\gp (\mathcal{M}(T)) = 2\ell(T) = 2\ip _4(T)$, so that $T$ is meagre. Hence we can assume that $s \geq 2$ and $D_T$ is defined.  
	
	We begin with a lower bound on $\gp (\mathcal{M}(T))$ for trees with support vertices at distance at least three from each other and then proceed to prove that this bound is sharp. 
	
	\begin{lem}\label{lower bound n+l-w}
		If $T$ has $\ell $ leaves and $s$ support vertices and also $D_T \geq 5$, then the general position number of the Mycielskian $\mathcal{M}(T)$ satisfies
		\[ \gp(\mathcal{M}(T)) \geq n+\ell-s.\] 
	\end{lem}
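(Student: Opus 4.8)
The plan is to prove the lower bound by producing a single explicit Mycielski partition of $T$ that realises the value $n+\ell-w$, and then appealing to Theorem~\ref{thm:characterisation}. The natural candidate is
$\pi(T)=(V_1,V_2,V_3,V_4)=(L,\,K,\,\emptyset,\,W)$:
put every leaf in $V_1$, every non-leaf non-NT vertex in $V_2$, leave $V_3$ empty, and put every NT vertex in $V_4$. For this partition $n_1=\ell$ and $n_4=w$, so $n+n_1-n_4=n+\ell-w$, and the entire task reduces to checking that $\pi(T)$ satisfies the three conditions of Definition~\ref{dfn:Mycielski partition}. (Note $w\geq 2$ is assumed, so $D_T$ is defined, and the hypothesis $D_T\geq 5$ will be used exactly once.)

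Conditions 1 and 2 should be immediate. Condition 1 is vacuous, since $\langle V_1\cup V_3\rangle=\langle L\rangle$ is edgeless: two leaves of a tree of order $\geq 3$ cannot be adjacent. Condition 2 is also vacuous, since the unique neighbour of any leaf is an NT vertex, hence lies in $W=V_4$; thus no vertex of $V_1\cup V_3=L$ has a neighbour in $V_2=K$.

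The substance of the proof is Condition 3. Since $T$ is a tree, the shortest $u,v$-path $P$ is the unique $u,v$-path, and since leaves have degree one, no leaf can be an internal vertex of $P$; hence any third point of $V_1\cup V_2\cup V_3=L\cup K$ lying on $P$ must lie in $K=V_2$. I would then split on the location of the endpoints $u,v$. If $u,v\in K=V_2$, then either $\ell(P)\geq 3$, which is permitted outright by Condition 3, or $\ell(P)=2$, in which case the midpoint is either in $W=V_4$ (so the hypothesis of Condition 3 is not met) or in $K=V_2$ (giving form (iii)). If $u$ is a leaf and $v\in K$, write $P=u,y,\dots,v$ where $y\in W=V_4$ is the NT vertex of $u$; the key point is that the neighbour of the leaf $u$ along $P$ is forced to lie in $V_4$. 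For $\ell(P)=2$ the path $u,y,v$ has no third point in $L\cup K$; for $\ell(P)=3$ the reverse path $v,z_1,y,u$ has form (ii) (its first two vertices $v,z_1$ lie in $V_2$ and its final vertex $u$ lies in $V_1$); and for $\ell(P)=4$ the reverse path $v,z_2,z_1,y,u$ has form (i) (first vertex in $V_2$, last vertex in $V_1$, penultimate vertex $y\in V_4$, hence outside $V_1\cup V_2$). Finally, if $u$ and $v$ are both leaves, then either they are attached to the same NT vertex $y$, so $d_T(u,v)=2$ and the sole internal vertex $y$ lies in $V_4$, or they are attached to distinct NT vertices, in which case $d_T(u,v)\geq D_T\geq 5>4$ and Condition 3 does not apply. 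This last step is precisely where $D_T\geq 5$ is needed.

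Having verified all three conditions, $\pi(T)$ is a Mycielski partition, and Theorem~\ref{thm:characterisation} yields $\gp(\mu(T))\geq n+n_1-n_4=n+\ell-w$. I expect no real obstacle here beyond the bookkeeping in Condition 3, namely matching each subcase (path lengths $2,3,4$ between the various combinations of leaves and $K$-vertices) to the correct one of the three permitted path-shapes; the alignment is clean because the vertex immediately following a $V_1$-leaf on any path is always an NT vertex of $V_4$, which is exactly what forms (i) and (ii) require at that position.
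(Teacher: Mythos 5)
Your proposal is correct and takes essentially the same approach as the paper: the paper's proof simply exhibits the set $S = L \cup L' \cup K'$, which corresponds to exactly your partition $(L, K, \emptyset, W)$, and asserts that Theorem~\ref{thm:characterisation} applies. Your case analysis of Condition 3 (in particular the use of $D_T \geq 5$ for pairs of leaves attached to distinct NT vertices) correctly supplies the verification that the paper leaves implicit.
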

	\begin{proof}
		Recall that $L$ is the set of leaves of $T$, $\Sigma $ the set of support vertices and $Z$ is $V(T) \setminus (L \cup \Sigma )$. $(L,Z,\emptyset ,\Sigma )$ is an $\mathcal{MGP}$-partition, so it follows from Corollary~\ref{cor:characterisation} that $S = L \cup L^\prime  \cup Z^\prime $ is in general position in $\mathcal{M}(T)$. This implies that $\gp(\mathcal{M}(T)) \geq 2\ell+t = n+\ell-s$.
	\end{proof}
	In the following, let $T$ be an abundant tree and $S$ be a gp-set of $\mathcal{M}(T)$, which we can assume by Lemma~\ref{lem:completes} does not contain $u^*$. We now present an upper bound on $\gp (\mathcal{M}(G))$ for graphs with girth at least six. Recall by our convention that the girth of a tree is $\infty $, so the following result applies in particular to trees.

	\begin{thm}\label{matching upper bound}
		Let $G$ be a graph with order $n \geq 4$, girth $\g(G) \geq 6$ and matching number $\nu (G)$. Then
		\[ \gp(\mathcal{M}(G)) \leq 2n-2\nu (G).\]
		Also, if $G$ satisfies $\gp(\mathcal{M}(G)) > n$ and $\pi = (V_1,V_2,V_3,V_4)$ is an $\mathcal{MGP}$-partition corresponding to a gp-set $S$ of $\mathcal{M}(G)$ that does not contain $u^*$, then $(V_1,V_2) = (V_2,V_3) = \emptyset $.
	\end{thm}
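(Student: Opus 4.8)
The plan is to obtain the inequality $\gp(\mu(G))\le 2n-2\nu(G)$ as a corollary of the structural claim, and then to prove the structural claim by using the girth hypothesis to force every offending configuration to be either a short cycle or a geodesic of precisely the shape ruled out by Condition~3 of Definition~\ref{dfn:Mycielski partition}. For the \emph{reduction}: if $\gp(\mu(G))\le n$ then, since a matching has $2\nu(G)\le n$ endvertices, $\gp(\mu(G))\le n\le 2n-2\nu(G)$; so assume $\gp(\mu(G))>n$ and fix a gp-set $S$ of $\mu(G)$ with $u^*\notin S$ and associated Mycielski partition $\pi=(V_1,V_2,V_3,V_4)$, so $n_1>n_4$ and $V_1\neq\emptyset$. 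Granting the structural claim we have $(V_1,V_2)=(V_1,V_3)=\emptyset$ (the latter from Lemma~\ref{cor:partition}) and $V_1$ independent, hence $N_G(V_1)\subseteq V_4$. For a maximum matching $M$, every edge of $M$ meeting $V_1$ has its other endvertex in $V_4$ (as $V_1$ is independent), so $M$ saturates at most $n_4$ vertices of $V_1$; thus at least $n_1-n_4$ vertices of $V_1$ are $M$-unsaturated, giving $n-2\nu(G)\ge n_1-n_4$ and $\gp(\mu(G))=n+n_1-n_4\le 2n-2\nu(G)$.

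For the structural claim I would first record two facts about a graph of girth at least $6$: each vertex of $N^2_G(x)$ has a unique neighbour in $N_G(x)$ (no $C_4$), and $N^2_G(x)$ is independent (adjacent $a,b\in N^2_G(x)$ with $N_G(x)$-neighbours $a_0,b_0$ give a triangle if $a_0=b_0$ and the $5$-cycle $x,a_0,a,b,b_0$ if $a_0\neq b_0$). To show $(V_1,V_2)=\emptyset$, suppose $uv\in E$ with $u\in V_1$, $v\in V_2$. Condition~2 places $(V_1\cup V_2)\setminus\{u,v\}$ inside the independent set $N^2_G(u)$, and with Lemma~\ref{cor:partition} this forces $(V_1,V_2)=\{uv\}$, so every vertex of $V_1$ has all its neighbours in $V_4$ except that $u$ is also adjacent to $v$. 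If $|V_1|=1$ then $n_1=1$ and $\gp(\mu(G))>n$ force $n_4=0$, so $u$ is a leaf at $v$ and $V(G)=\{u\}\cup V_2\cup V_3$; a $u,z$-geodesic for $z\in V_3$ has length $2$ or $3$ by Condition~2, and Condition~3 excludes both, so $V_3=\emptyset$ and $G=K_{1,n-1}$, whence $\gp(\mu(G))\ge 2(n-1)>n+1$ for $n\ge 4$ by Corollary~\ref{thm:lower bound}, contradicting that $\pi$ realises $\gp(\mu(G))=n+1$. If $|V_1|\ge 2$, take $u''\in V_1\setminus\{u\}$; then $u''\in N^2_G(u)$ with unique $N_G(u)$-neighbour $w\in V_4$ and $v\not\sim u''$, so $2\le d_G(v,u'')\le 3$. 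If $d_G(v,u'')=3$ then the geodesic $v,u,w,u''$ passes through $u\in V_1$ and matches none of the forms of Condition~3; if $d_G(v,u'')=2$, realised by some $w^*\in V_4$, then $u,v,w^*,u'',w,u$ is a $5$-cycle, contradicting $g(G)\ge 6$.

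It remains to show $(V_2,V_3)=\emptyset$, which proceeds in the same spirit now that $N_G(V_1)\subseteq V_4$. Suppose $vz\in E$ with $v\in V_2$, $z\in V_3$, and fix $x\in V_1$. Condition~2 applied to $z$ gives $d_G(z,x)=2$, realised by some $y\in N_G(x)\subseteq V_4$, so $2\le d_G(v,x)\le d_G(v,z)+d_G(z,x)=3$. If $d_G(v,x)=3$ the geodesic $v,z,y,x$ passes through $z\in V_3$ and matches none of the forms of Condition~3; if $d_G(v,x)=2$, realised by some $m\in V_4$, then $m\neq y$ (else $v,z,m$ is a triangle) and $v,m,x,y,z,v$ is a $5$-cycle, again contradicting $g(G)\ge 6$.

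The step I expect to be most delicate is the precise distance bookkeeping: each contradiction depends on the relevant path having length exactly $2$ or $3$, so that it is genuinely a geodesic (making Condition~3 applicable) or closes into a cycle of length at most $5$; one must carefully identify which vertices of $V_1\cup V_2\cup V_3$ such a path passes through and check it against each of the permitted forms i)--iii) of Condition~3. This is also where both hypotheses are used --- $g(G)\ge 6$ in ruling out the $5$-cycles and in the independence of $N^2_G(\cdot)$, and $n\ge 4$ in the star case (the star $K_{1,2}$ shows it cannot be weakened).
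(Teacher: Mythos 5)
Your proposal is correct and follows essentially the same route as the paper: using girth $\geq 6$ together with Conditions 2 and 3 of Definition~\ref{dfn:Mycielski partition} to turn any edge of $(V_1,V_2)$ or $(V_2,V_3)$ into either a forbidden geodesic or a cycle of length at most $5$, handling the degenerate case by reducing to a star, and then counting against a maximum matching via $(V_1,V_2)=(V_1,V_3)=\emptyset$. Your final count (via $M$-unsaturated vertices of $V_1$) and your explicit treatment of the $(V_2,V_3)$ case are minor reorganisations of the paper's argument, not a different method.
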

	\begin{proof}
		Let $G$, $S$ and $\pi $ be as described. Suppose that the matching $(V_1,V_2)$ contains an edge $u \sim v$, where $u \in V_1$ and $v \in V_2$. By Condition 2 of Definition~\ref{dfn:Mycielski partition}, all vertices of $(V_1 \cup V_2) \setminus \{ u,v\} $ are contained in $N^2(u)$. 
		
		Suppose that $n_1 \geq 2$ and let $w \in V_1\setminus \{ u\} $. By Conditions 1 and 2 of Definition~\ref{dfn:Mycielski partition}, $w \not \in N_G(u) \cup N_G(v)$, so $G$ contains a path $w,x,u,v$, where $x \in N_G(u)\setminus \{ v\} $. By Condition 3 of Definition~\ref{dfn:Mycielski partition}, this cannot be a shortest path, so there must be a path of length two from $w$ to $v$, implying the existence of a cycle of length at most five, a contradiction. 
		
		Thus in this case $n_1 = 1$. Thus if $\gp (\mathcal{M}(G)) > n$, then $n_4 = 0$. As $u$ has no neighbours in $V_3 \cup V_4$ and just one neighbour in $V_2$, $u$ is a leaf with support vertex $v$. Thus any of the paths from $u$ to $V_3$ guaranteed by Condition 2 must pass through $v_2$, violating Condition 3. Thus $V_3 = \emptyset $. As $G$ is triangle-free, it follows that $G$ is a star. However, by Corollary~\ref{bound}, the star $S_n$ has $\gp(\mathcal{M}(S_n)) = 2n-2$, whereas in the partition described above all but one vertex of $G$ lies in $V_2$. Since $S$ is assumed to be a gp-set, this implies that $\gp (\mathcal{M}(G)) = n+1 = 2n-2$, yielding $n = 3$. We conclude that $(V_1,V_2) = \emptyset $.
		
		Suppose now that there is an edge $v \sim w$ in $G$, where $v \in V_2$ and $w \in V_3$. By Condition 2 of Definition~\ref{dfn:Mycielski partition}, all vertices of $V_1$ lie in $N^2(w)$. By Condition 2, $N_G(v) \cap V_1 = \emptyset $. Thus, for any $u \in V_1$, there is a path $u,x,w,v$ for some $x \in V(G)$, and by Condition 3 this cannot be a shortest path. Again this implies the existence of a cycle with length at most five if $V_1 \not = \emptyset $. 
		
		If $\gp(\mathcal{M}(G)) = n$, then the claimed bound certainly holds, so assume that $\gp(\mathcal{M}(G)) > n$. Let $M$ be a maximum matching of $G$. If $x \sim y$ is any edge in $M$, then it follows from $(V_1,V_2) = (V_1,V_3) = \emptyset $ that one of $x,y$ is in $V_4$ and the vertices $\{ x,y,x^\prime ,y^\prime \} $ of $\mathcal{M}(G)$ can contribute at most two to $\gp(\mathcal{M}(G))$. Summing over all edges of $M$, we see that the vertices saturated by $M$ and their $\mathcal{M}$-twins contribute at most $2\nu(G)$ to $\gp(\mathcal{M}(G))$. Assuming that every other vertex lies in $V_1$, we obtain the upper bound $\gp(\mathcal{M}(G)) \leq 2\nu(G) +2(n-2\nu(G)) = 2n-2\nu(G)$.  
	\end{proof}
	The star $S_n$ shows that the bound of Theorem~\ref{matching upper bound} is tight.
	
	\begin{coro}\label{perfect matching}
		If $G$ is a graph with order $n \geq 4$ and girth $\g(G) \geq 6$ that has a perfect matching, then $\gp(\mathcal{M}(G)) = n$.
	\end{coro}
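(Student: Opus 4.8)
The plan is to obtain this as an immediate consequence of Theorem~\ref{matching upper bound} together with the elementary lower bound of Corollary~\ref{thm:lower bound}. First I would note that a graph of order $n \geq 4$ with girth at least six certainly contains no triangle, hence is non-complete, so that all of the earlier machinery (Lemma~\ref{lem:completes}, Theorem~\ref{thm:characterisation}, Corollary~\ref{thm:lower bound}, Theorem~\ref{matching upper bound}) applies. In particular the existence of a perfect matching forces $n$ to be even and gives $\nu(G) = n/2$.

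For the upper bound I would simply substitute $\nu(G) = n/2$ into the inequality $\gp(\mu(G)) \leq 2n - 2\nu(G)$ furnished by Theorem~\ref{matching upper bound}, which collapses to $\gp(\mu(G)) \leq 2n - n = n$. For the matching lower bound, I would invoke Corollary~\ref{thm:lower bound}, which yields $\gp(\mu(G)) \geq \max\{n, 2\alpha_{gp}(G)\} \geq n$; concretely, this is witnessed by the general position set $V'$ arising from the trivial Mycielski partition with $V_2 = V(G)$. Combining the two bounds gives $\gp(\mu(G)) = n$.

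I do not expect any genuine obstacle here: the substantive work has already been carried out in Theorem~\ref{matching upper bound}, and the corollary is just the specialisation to the case in which the maximum matching saturates every vertex, where the two sides $2\nu(G)$ and $2(n - 2\nu(G))$ of that argument's accounting degenerate in the cleanest possible way. The only minor points worth a sentence are to record that $n$ is even and that no step of the argument requires $G$ to be connected, so the statement holds verbatim for disconnected graphs of girth at least six admitting a perfect matching.
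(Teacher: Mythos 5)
Your proposal is correct and is exactly the derivation the paper intends (the corollary is stated without proof immediately after Theorem~\ref{matching upper bound}): substitute $\nu(G)=n/2$ into the bound $\gp(\mu(G))\leq 2n-2\nu(G)$ and pair it with the trivial lower bound $\gp(\mu(G))\geq n$ witnessed by $V'$. No issues.
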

	
	\begin{coro}\label{(V_1,V_2) and (V_2,V_3) empty}
		If $T$ is a tree with order $n \geq 3$ and matching number $\nu (T)$, then $\gp(\mathcal{M}(T)) \leq 2n-2\nu(T)$. If $\gp (\mathcal{M}(T)) > n$, then $(V_1,V_2)$ and $(V_2,V_3)$ are empty. 
	\end{coro}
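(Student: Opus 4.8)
The plan is to obtain this corollary as a direct specialisation of Theorem~\ref{matching upper bound} to acyclic graphs, once a trivial small-order case has been disposed of. First I would observe that the only tree of order $3$ is the path $P_3$, which is a star; by the remark preceding Lemma~\ref{lower bound n+l-w} (or directly from Theorem~\ref{n+alpha-1 characterisation}) every star satisfies $\gp(\mu(T)) = 2\ell(T) = \max\{n,2\alpha_{gp}(T)\}$ and is therefore meagre. Hence every abundant tree has order $n \geq 4$. Moreover a tree of order at least $3$ is never complete, so Lemma~\ref{lem:completes} applies and, consistently with the convention fixed just before this corollary, there is a gp-set $S$ of $\mu(T)$ avoiding the root $u^*$, with associated Mycielski partition $\pi(T) = (V_1,V_2,V_3,V_4)$.

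Next I would record that, by the definition of abundance, $\gp(\mu(T)) > \max\{n,2\alpha_{gp}(T)\} \geq n$, so in particular $\gp(\mu(T)) > n$. Since $T$ is acyclic we have $g(T) = \infty \geq 6$ under the girth convention adopted in the paper, and we have just seen that $n \geq 4$; thus $T$ meets every hypothesis of Theorem~\ref{matching upper bound} with matching number $\nu(T)$. Applying that theorem then delivers simultaneously the bound $\gp(\mu(T)) \leq 2n - 2\nu(T)$ and the structural conclusion $(V_1,V_2) = (V_2,V_3) = \emptyset$ for the partition $\pi(T)$, which is precisely what is asserted.

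There is essentially no obstacle here beyond bookkeeping. The single point that requires attention is that the statement relaxes the order hypothesis from $n \geq 4$, as required by Theorem~\ref{matching upper bound}, to $n \geq 3$; this relaxation is harmless because the unique tree of order $3$ is a star and hence not abundant, making the case $n = 3$ vacuous. Everything else is simply Theorem~\ref{matching upper bound} read in the case $g = \infty$.
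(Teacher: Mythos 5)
Your proposal is correct and matches the paper's (implicit) reasoning: the corollary is stated without proof as an immediate specialisation of Theorem~\ref{matching upper bound} to acyclic graphs, using $g(T)=\infty$ and the fact that abundance forces $\gp(\mu(T))>n$. Your additional observation that the only order-$3$ tree is the star $P_3$, which is meagre, correctly disposes of the mismatch between the hypotheses $n\geq 3$ here and $n\geq 4$ in the theorem.
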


	\begin{lem}\label{lem:meagre tree}
		Let $T$ be any tree with order $n \geq 3$ and $\ell = s$, i.e. each support vertex of $T$ is adjacent to exactly one leaf. Then $\gp(\mathcal{M}(T)) = n$.
	\end{lem}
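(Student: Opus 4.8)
The plan is to prove $\gp(\mu(T))=n$ by establishing each inequality. The lower bound $\gp(\mu(T))\ge n$ is free: the trivial Mycielski partition $(\emptyset,V(T),\emptyset,\emptyset)$ already gives $n+n_1-n_4=n$ in Theorem~\ref{thm:characterisation} (equivalently, $V(T)'$ is in general position in $\mu(T)$). For the upper bound I would argue by contradiction, assuming $\gp(\mu(T))>n$. Since every NT vertex of $T$ is adjacent to exactly one leaf, the assignment of each leaf to its NT neighbour is a bijection $L\to W$; as $L$ is an independent general position set of $T$ of size $\ell$ and $\gp(T)=\ell(T)$, we get $2\alpha_{gp}(T)=2\ell(T)=n-k\le n<\gp(\mu(T))$, so $T$ is abundant. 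Picking a gp-set $S$ of $\mu(T)$ avoiding $u^*$ (Lemma~\ref{lem:completes}) with associated Mycielski partition $(V_1,V_2,V_3,V_4)$, abundance becomes $n_1>n_4$, and the whole task is to contradict this by proving $n_1\le n_4$.

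First I would collect the structural facts. By Corollary~\ref{(V_1,V_2) and (V_2,V_3) empty} we have $(V_1,V_2)=(V_2,V_3)=\emptyset$, and by Lemma~\ref{cor:partition} also $(V_1,V_3)=\emptyset$ with $V_1$ independent; hence every neighbour of a vertex of $V_1$ lies in $V_4$, and $F:=\langle V_1\cup V_4\rangle$ is a forest in which $V_1$ is independent and all its edges go to $V_4$. Next I would extract from Condition~3 of Definition~\ref{dfn:Mycielski partition} the key fact: if $u,v\in V_1$ and $d_T(u,v)\le 4$, then every internal vertex of the $u,v$-path of $T$ lies in $V_4$. Indeed, were an internal vertex in $V_1\cup V_2\cup V_3$, Condition~3 would apply to this path, but with both endpoints in $V_1$ (hence not in $V_2$) none of the permitted forms i)--iii), nor the exceptional case, can occur. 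In particular no vertex of $V_1$ is an internal vertex of the $T$-path joining two vertices of $V_1$ at distance at most $4$.

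It then suffices to show that each component $C$ of $F$ satisfies $|C\cap V_4|\ge|C\cap V_1|$. Write $A=C\cap V_1$, $B=C\cap V_4$. Counting the edges of the tree $C$ (each vertex of $A$ sends all its edges into $B$, and a vertex of $A$ with only one edge in $C$ must be a leaf of $T$, by the bijection $L\to W$) gives $|B|\ge |A\setminus L|+1+e_C(B)$, where $e_C(B)$ is the number of edges of $T$ inside $B$. So everything reduces to showing that the number $m$ of leaves of $T$ lying in $A$ is at most $e_C(B)+1$; granting this, $|A|=m+|A\setminus L|\le(e_C(B)+1)+(|B|-1-e_C(B))=|B|$, and summing over the components of $F$ contradicts $n_1>n_4$. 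To bound $m$ I would use that the $m$ leaves in $A$ have pairwise distinct NT-partners $\bar\lambda\in B$ (no vertex of $B$ is adjacent to two leaves of $T$, again by $\ell=w$), and combine this with the displayed fact: since no vertex of $V_1$ can be the internal vertex of a $4$-path between two vertices of $V_1$, the partners of these leaves cannot be ``linked through $V_1$'' inside $C$, so — passing to the minimal subtree of $C$ spanning the partners and checking that each of its $V_1$-vertices is adjacent inside it to at most one partner — the partners must be linked through edges lying inside $B$, giving $m-1\le e_C(B)$.

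The hard part is exactly this last estimate $m\le e_C(B)+1$: turning ``no forbidden $4$-paths'' plus the bijection $L\to W$ into the statement that, within a component, the leaves' NT-partners are tied together by $B$-edges. Besides the spanning-subtree route sketched above, an alternative is an induction on $|C|$ in which one peels off a pendant leaf $\lambda$ of $C$ (necessarily a leaf of $T$, with partner $\bar\lambda\in B$) together with $\bar\lambda$; the delicate point there is to recover the ``$+1$'' lost when a vertex of $V_1$ is removed but its partner is not, and this is precisely where $\ell=w$ is used, since it forbids the equality configuration — a vertex of $B$ carrying two leaves of $T$ — that would otherwise break the induction.
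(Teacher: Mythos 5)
Your route is genuinely different from the paper's: the paper proves this lemma by a double induction (first on paths, peeling off a leaf and its neighbour and using the matching bound together with an isometric-subgraph argument; then on general trees, deleting maximal pendant paths of degree-two vertices, with a special case when that path has order three), whereas you give a direct, global counting argument on the components of the induced forest $\langle V_1\cup V_4\rangle$. Your framework is sound: the reduction to $n_1\le n_4$, the facts $(V_1,V_2)=(V_2,V_3)=(V_1,V_3)=\emptyset$ with $V_1$ independent, the observation that $\deg_C(a)=\deg_T(a)$ for $a\in A$ (so a degree-one vertex of $A$ in $C$ is a leaf of $T$), and the edge count $|B|\ge |A\setminus L|+1+e_C(B)$ are all correct, and they do reduce everything to $m\le e_C(B)+1$.

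The gap is in the justification of that last estimate. The check you propose --- that each $V_1$-vertex of the Steiner tree of the partners is adjacent to at most one partner --- rules out only the configuration $\bar\lambda_i,a,\bar\lambda_j$ with $a\in V_1$, and is not by itself enough: the path $\bar\lambda_i,a_1,b,a_2,\bar\lambda_j$ with $a_1,a_2\in V_1$, $b\in V_4$ satisfies your check yet contributes no $B$--$B$ edge, so $m-1\le e_C(B)$ does not follow. What saves the argument is your own displayed key fact applied to a \emph{leaf and a non-leaf} vertex of $V_1$: if the $T$-path between two distinct partners $\bar\lambda_i,\bar\lambda_j$ contained no $B$--$B$ edge, it would alternate $\bar\lambda_i,a_1,b_1,a_2,\dots,a_t,\bar\lambda_j$ with all $a_s\in V_1$ (since $V_1$ is independent), and then either $t=1$ and $\lambda_i,\bar\lambda_i,a_1,\bar\lambda_j,\lambda_j$ is a forbidden length-four geodesic between two $V_1$-vertices through $a_1\in V_1$, or $t\ge 2$ and $\lambda_i,\bar\lambda_i,a_1,b_1,a_2$ is such a geodesic between $\lambda_i\in V_1$ and $a_2\in V_1$ through $a_1$. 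Hence every path between two partners carries a $B$--$B$ edge; deleting the $B$--$B$ edges of the Steiner tree then separates all $m$ partners, giving $e_C(B)\ge m-1$. With that lemma inserted, your proof is complete, and it is arguably cleaner than the paper's induction; as written, though, the decisive step is asserted rather than proved, and the specific reason you offer for it would not withstand the alternating-path counterexample above.
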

	\begin{proof}
		Let $T$ be as described and assume for a contradiction that $\gp (\mathcal{M}(T)) > n$. By Lemma~\ref{cor:partition} and Corollary~\ref{(V_1,V_2) and (V_2,V_3) empty}, every vertex of $V_1$ has neighbourhood lying in $V_4$. Consider the subgraph $\langle V_1 \cup V_4\rangle $ of $T$. As $n_1 > n_4$, there is no matching in this subgraph that saturates $V_1$ and we can choose a smallest subset $\widetilde{V_1} \subseteq V_1$ such that $\widetilde{V_4} = N_T(\widetilde{V_1})$ satisfies $|\widetilde{V_4}| < |\widetilde{V_1}|$. In particular, the subtree $\langle \widetilde{V_1} \cup \widetilde{V_4}\rangle $ is connected. 
		
		If any $v \in \widetilde{V_4}$ has just one neighbour $u$ in $\widetilde{V_1}$, then we could delete $u$ and $v$ from $\widetilde{V_1}$ and $\widetilde{V_4}$ to obtain a smaller pair with $ |\widetilde{V_1} \setminus \{ u\} | < |\widetilde{V_4} \setminus \{ v\} | $, contradicting minimality of $\widetilde{V_1}$. Thus let $u_1,u_2 \in \widetilde{V_1}$, $w_1 \in \widetilde{V_4}$, where $u_1,u_2 \in N_T(w_1)$. By our assumption $\ell = s$, not both of $u_1,u_2$ are leaves, so there is a $w_2 \not = w_1$ with $u_2 \sim w_2$. Now by the preceding argument $w_2$ must have a neighbour $u_3 \not = u_2$ in $\widetilde{V_1}$. To avoid cycles, we also have $u_3 \not = u_1$. Therefore we have constructed a path $u_1,w_1,u_2,w_2,u_3$, where $u_1,u_2,u_3 \in V_1$. This path is a geodesic, and so violates Condition 3 of Definition~\ref{dfn:Mycielski partition}. This contradiction establishes the result.     
	\end{proof}
	Lemma~\ref{lem:meagre tree} shows in particular that paths $P_n$ with $n \geq 4$ are meagre (and $P_3$ is also easily seen to be meagre).
	\begin{coro}\label{cor:treeupperbound}
		If $T$ is any tree with order $n \geq 3$, leaf number $\ell $ and $s$ support vertices, then $\gp (\mathcal{M}(T)) \leq n+\ell -s$.
	\end{coro}
	\begin{proof}
		We prove the result by induction on $\ell -s$. If $\ell = s$, then Lemma~\ref{lem:meagre tree} shows that the result is true. Suppose that $T$ satisfies $\gp (\mathcal{M}(T)) > n$ and construct the sets $\widetilde{V_1}$ and $\widetilde{V_4}$ as in the proof of Lemma~\ref{lem:meagre tree}. As in the previous lemma, we can assume that there is a vertex $w_1 \in \widetilde{V_4}$ that is adjacent to at least two leaves $u_1,u_2$ in $\widetilde{V_1}$. Set $\widetilde{T} = T \setminus \{u_1\} $. By induction, $\gp (\mathcal{M}(\widetilde{T})) \leq (n-1)+(\ell-1)+s$. Adding back the leaf $u_1$, the vertices $u_1,u_1^\prime $ can contribute at most two to $\gp (\mathcal{M}(T))$, so $\gp (\mathcal{M}(T)) \leq (n-2+\ell -s)+2 = n+\ell -s$. 
	\end{proof}
	Combining the upper bound from Corollary~\ref{cor:treeupperbound} with the lower bound from Lemma~\ref{lower bound n+l-w} we obtain the exact value of $\gp (\mathcal{M}(T))$ when $D_T \geq 5$.
	\begin{coro}\label{cor:main tree theorem}
		If $T$ is any tree with order $n \geq 3$, $D_T \geq 5$,  leaf number $\ell $ and $s$ support vertices, then $\gp (\mathcal{M}(T)) = n+\ell -s$. 
	\end{coro}

	We show finally that $\gp (\mathcal{M}(T)) = n+\ell -s$ also holds for some trees with $D_T \leq 4$. A \emph{caterpillar} is a tree formed by attaching leaves to the vertices of a path (called the \emph{central path}). 
	
	\begin{coro}\label{the:all leaves and support}
		If $T$ is a tree in which every vertex is either a leaf or a support vertex, then $\gp (\mathcal{M}(T)) = 2\ell (T)$ and $T$ is meagre.
	\end{coro}
	\begin{proof}
		If every vertex of $T$ is either a leaf, or a support vertex, then $n = \ell +s$ and the upper bound in Corollary~\ref{cor:treeupperbound} gives $\gp (\mathcal{M}(T)) \leq (\ell +s)+\ell -s = 2\ell $. It is shown in~\cite{ullas-2016} that the set of leaves $L$ is a general position set of $T$ and hence an independent 4-position set. Therefore the lower bound $2\ell $ from Corollary~\ref{thm:lower bound} matches the upper bound $n+\ell -s$ and $T$ is meagre.  
	\end{proof}
	Corollary~\ref{the:all leaves and support} shows in particular that all caterpillars with a leaf adjacent to each vertex of the central path is meagre. Equality in the bound $\gp (\mathcal{M}(T)) \leq n+\ell -s$ does not always hold, as can be easily seen by attaching several leaves to the endpoints of $P_3$.

	\section{Conclusion}
	
	We conclude with two promising directions for further research. The first obvious question is the complexity of finding the gp-number of Mycielskians. The proof of Lemma~\ref{lem:completes} shows that any general position sets containing $u^*$ and having cardinality at least $n+1$ can be found easily in polynomial time. Therefore, given the duality between general position sets of $\mathcal{M}(G)$ and $\mathcal{MGP}$-partitions of $V(G)$, the complexity of finding $\gp (\mathcal{M}(G))$ is the same as the complexity of finding the largest value of $n_1-n_4$ over all $\mathcal{MGP}$-partitions $(V_1,V_2,V_3,V_4)$ of $G$.
	
	\begin{prb}
		What is the complexity of finding the general position number of the Mycielskian $\mathcal{M}(G)$ of an arbitrary graph $G$?
	\end{prb}
	
	Secondly, it was mentioned in Section~\ref{sec:intro} that the general position number is one amongst a variety of `position-type' parameters. We suggest that it would be of interest to find the values of some of these other position numbers on Mycielskians of graphs, in particular the following three numbers. 
	
	\begin{prb}
		What are the mutual visibility, lower general position and monophonic position numbers of Mycielskians of graphs?
	\end{prb}

	\section*{Statements and Declarations}
	First of all, we thank Sudev Naduvath for suggesting this topic. The third author was supported by EPSRC grant EP/W522338/1 and London Mathematical Society grant ECF-2021-27. The authors have no relevant financial or non-financial interests to disclose. There are no datasets associated with the article. The authors thank the two reviewers for their helpful comments and Grahame Erskine for useful discussion.


\begin{thebibliography}{1}
		
		
		\bibitem{AbidRao} Abid, A.M. \& Rao, T.R., \emph{Dominator coloring of Mycielskian graphs.} Australas. J. Comb. 73 (2019), 274--279.
		
		\bibitem{anand2019characterization}
		Anand, B.S., Chandran S.V., U., Changat, M., Klav{\v{z}}ar, S. \& Thomas, E.J., \emph{Characterization of general position sets and its applications to cographs and bipartite graphs.} Appl. Math. Comput. 359 (2019), 84--89.
		
		\bibitem{iteration time} Araujo, J., Dourado, M.C., Protti, F. \& Sampaio, R., \emph{The general position number and the iteration time in the $P_3$ convexity.} arXiv preprint arXiv:2305.00467 (2023).
		
		\bibitem{Balakrishnan} Balakrishnan, R. \& Raj, S.F., \emph{Connectivity of the Mycielskian of a graph.} Discrete Math. 308 (12) (2008), 2607--2610.
		
		\bibitem{BalKavSo} Balakrishnan, R., Kavaskar, T. \& So, W., \emph{The energy of the Mycielskian of a regular graph.} Australas. J. Comb. 52 (2012), 163--172.
		
		\bibitem{BonMur} Bondy, J.A. \& Murty, U.S.R., \emph{Graph Theory with Applications}  London: Macmillan, Vol. 290 (1976). 
		
		\bibitem{ullas-2016} Chandran S.V., U. \& Parthasarathy, G.J., \emph{The geodesic irredundant sets in graphs.} Int. J. Math. Comb. 4 (2016).
		
		\bibitem{mutualvisibility} Di Stefano, G., \emph{Mutual visibility in graphs.} Appl. Math. Comput. 419 (2022), 126850.
		
		\bibitem{lowerposition} Di Stefano, G., Klav\v{z}ar, S., Krishnakumar, A., Tuite, J. \& Yero, I.G., \emph{Lower general position sets in graphs.} Discuss. Math. Graph Theory, to appear (2023), doi.org/10.7151/dmgt.2542.
		
		\bibitem{DliBouKch} Dliou, K., Boujaoui, H.E. \& Kchikech, M., \emph{The $L(2,1)$-labeling of the iterated Mycielski graphs of graphs and some problems related to matching problems.} Discuss. Math. Graph Theory 44 (2) (2024), 489--518.
		
		\bibitem{dudeney1958amusements} Dudeney, H.E., \emph{Amusements in Mathematics.} Nelson, Edinburgh (1917).
		
		\bibitem{Fan} Fan, G.H., \emph{Circular chromatic number and Mycielskian graphs.} Combinatorica 24 (1) (2004), 127--135.
		
		\bibitem{froese2017finding}
		Froese, V., Kanj, I., Nichterlein, A. \& Niedermeier, R., \emph{Finding points in general position.} Int. J. Comput. Geom. Appl. 27 (04) (2017), 277--296.
		
		\bibitem{Granados} Granados, A., Pestana, D., Portilla, A. \& Rodr\'iguez, J.M., \emph{Gromov hyperbolicity in Mycielskian graphs.} Symmetry 9 (8) (2017), 131.
		
		\bibitem{KlaKriTuiYer} Klav\v zar, S., Krishnakumar, A., Tuite, J. \& Yero, I.G., \emph{Traversing a graph in general position.} Bull. Aust. Math. Soc. (2023), 1--13.
		
		\bibitem{KlaKuzPetYer} Klav\v zar, S., Kuziak, D., Peterin, I. \& Yero, I.G.,
		\emph{A Steiner general position problem in graph theory.}
		Comput.\ Appl.\ Math.\ 40 (2021), 1--15.
		
		\bibitem{KlaRalYer} Klav\v zar, S., Rall, D.F. \& Yero, I.G.,
		\emph{General $d$-position sets.}
		Ars Math.\ Contemp.\ 21 (2021), Paper P1.03.
		
		\bibitem{KlaTan} Klav\v{z}ar, S. \& Tan, E., \emph{Edge general position sets in Fibonacci and Lucas cubes.} Bull. Malaysian Math. Sci. Soc. 46 (4) (2023), 120.
		
		\bibitem{KorVes} Kor\v{z}e, D. \& Vesel, A., \emph{General position sets in two families of Cartesian product graphs.} Mediterr. J. Math. 20 (4) (2023), 203.
		
		\bibitem{manuel2018general}	Manuel, P. \& Klav{\v{z}}ar, S., \emph{A general position problem in graph theory.} Bull. Aust. Math. Soc. 98 (2) (2018), 177--187.
		
		\bibitem{manuel-2022}
		Manuel, P., Prabha, R. \& Klav\v{z}ar, S., \emph{The edge general position problem.} Bull.\ Malays.\ Math.\ Sci.\ Soc.\ 45 (2022), 2997--3009.
		
		\bibitem{prarenman} Prabha, R., Renukaa Devi, S. \& Manuel, P., \emph{General position problem of butterfly networks.} arXiv preprint arXiv:2302.06154 (2023).
		
		\bibitem{mycielski1955coloriage} Mycielski, J., \emph{Sur le coloriage des graphes.} Colloq. Math volume 3 (1955), 9.
		
		\bibitem{payne2013general} Payne, M.S. \& Wood, D.R., \emph{On the general position subset selection problem.} SIAM J. Discrete Math. 27 (4) (2013), 1727--1733.
		
		\bibitem{SavVij} Savitha, K.S. \& Vijayakumar, A. \emph{Some network topological notions of the Mycielskian of a graph.} AKCE Int. J. Graphs Comb. 13 (1) (2016), 31--37.
		
		\bibitem{ThaChaTuiThoSteErs} Thankachy, M., Chandran S.V., U., Tuite, J., Thomas, E.J., Di Stefano, G. \& Erskine, G. \emph{On the vertex position number of graphs.} Discuss. Math. Graph Theory, to appear (2023), doi.org/10.7151/dmgt.2491
		
		\bibitem{independentposition} Thomas, E.J. \& Chandran S.V., U., \emph{On independent position sets in graphs.} Proyecciones (Antofagasta) 40 (2) (2021), 385--398.
		
		\bibitem{ThomasChandranTuite} Thomas, E.J., Chandran S.V., U., Tuite, J. \& Di Stefano, G., \emph{On monophonic position sets in graphs.} Discrete Appl. Mathematics (2023), doi.org/10.1016/j.dam.2023.02.021
		
		\bibitem{KlaTanTian} Tian, J., Klav\v{z}ar, S. \& Tan, E., \emph{Extremal edge general position sets in some graphs.} Graphs Comb. 40 (40) (2024), doi.org/10.1007/s00373-024-02770-z.
		
		\bibitem{VarLak} Varghese, J. \& Lakshmanan, S.A. \emph{Italian domination on Mycielskian and Sierpinski graphs.} Discrete Math. Algorithms Appl. (2020), 2150037.
		
	\end{thebibliography}
\end{document}